\newcommand{\R}{{\mathbb{R}}}          
\newcommand{\XIS}{{\mathfrak{X}}}
\newcommand{\rr}{\rightarrow}
\newcommand{\lrr}{\longrightarrow}
\newcommand{\calf}{{\cal F}}             %
\newcommand{\call}{{\cal L}}             %
\newcommand{\calm}{{\cal M}}             %
\newcommand{\calp}{{\cal P}}             %
\newcommand{\calq}{{\cal Q}}             %
\newcommand{\calu}{{\cal U}}             %
\newcommand{\sg}{{\mathrm{sg}}\,}
\newcommand{\Der}{{\mathrm{Der}}}
\newcommand{\End}[1]{{\mathrm{End}}\,{#1}}
\newcommand{\ip}{{\mathrm{i}}}
\newcommand{\dx}{{\mathrm{d}}}
\newcommand{\inv}[1]{{#1}^{-1}}
\newcommand{\papa}[2]{\frac{\partial#1}{\partial#2}}
\newcommand{\cinf}[1]{{\mathrm{C}}^\infty_{#1}}
\newcommand{\estrela}{{\boldsymbol{\star}}}
\newtheorem{teo}{Theorem}
\newtheorem{prop}{Proposition}
\def\cyclic{\mathop{\kern0.9ex{{+}
\kern-2.2ex\raise-.28ex\hbox{\Large\hbox{$\circlearrowright$}}}}\limits}
\title{The mirror map and other natural tangent tensors}
\author{Rui Albuquerque}
\begin{document}


\maketitle


\begin{abstract}

We review the intrinsic geometry of the tangent bundle of a differentiable manifold $M$, aside from any non-natural structures. We recall the properties of the mirror map $B\in \End{TTM}$, known also as the canonical endomorphism or the almost-tangent structure, solve some cohomological questions and raise other induced by the $\dx_B$ operator.

\end{abstract}


\
\vspace*{5mm}\\
{\bf Key Words:} mirror map; vector field; complete lift; cohomology.
\vspace*{2mm}\\
{\bf MSC 2020:} Primary: 58A05; Secondary: 58A32

\vspace*{6mm}

\markright{\sl\hfill  Rui Albuquerque \hfill}

\setcounter{section}{1}


\subsection{{Tangent and cotangent manifolds}}



The cotangent bundle of a given differentiable manifold with its natural symplectic structure is a main force of differential geometry. For each $M$ of dimension $m$ and class C$^k$  ($2\leq k\leq\infty$) there is an associated closed non-degenerate 2-form $\dx\lambda$ defined on the total space of the well-known cotangent bundle $\pi:T^*M\lrr M$. One sees it immediately while recalling the differentiable structure of such manifold.

For any chart $(\calu,x^1,\ldots,x^m)$ on an open subset $\calu\subset M$, the coordinates $(x^i,p_j)_{i,j=1,\ldots,m}$, often called position-momentum coordinates, allow us to write the global Liouville 1-form on $T^*M$ as
\[  \lambda= p_j\dx x^j .  \]
One always abbreviates $\dx x^i$ for $\dx(x^i\circ\pi)=\pi^*\dx x^i$, thus giving a first example of a natural lift of a tensor from $\calu$ to $T^*\calu\subset T^*M$. A change of coordinates in $M$, say to
$(\calu',x'^a)_{a=1,\ldots,m}$, yields
\begin{equation}  \label{trancotangent1}
  \dx x'^a=\papa{x'^a}{x^j}\dx x^j\qquad  p'_a=p_j\papa{x^j}{x'^a}  \qquad
\dx p'_a=p_j\papa{^2x^j}{x'^b\partial x'^a}\dx x'^b +\papa{x^j}{x'^a}\dx p_j .
\end{equation}
It follows that $\lambda$ is globally defined and therefore that $\dx\lambda= \dx p_j\wedge\dx x^j$ appears in canonical form, defining the natural symplectic structure on $T^*M$.

Note that whenever $x$ and $x'$ are charts of $M$, ie. homeomorphisms from overlapping open subsets in $M$ into respective open subsets in $\R^m$, then the above notation, representing the euclidean coordinates by $(y^1,\ldots,y^m)$, refers to
\begin{equation} \label{derivationexplained}
 \papa{x'^a}{x^j}(x)=\papa{(x'^a\circ \inv{x})}{y^j}(y^1,\ldots,y^m) .
\end{equation}
Further, using euclidean space, we have
\[ p'_a=p_j\papa{x^j}{x'^a}(x)=p_j\langle\dx(x\circ\inv{x'})_{y'}(e_a),e_j\rangle= \langle(\dx(x\circ\inv{x'}))^\mathrm{ad}_{y'}(p),e_a\rangle . \]

The differentiable structure of a manifold such as $T^*M$ may indeed be defined as the family of open sets $\calu_\alpha\times\R^m$, distinguished by the charts (and these by the $\alpha$) of an atlas of $M$, together with the identification of all its product charts $\calu_\alpha\times\R^m$ on the two products of the intersection $\calu_{\alpha_1}\cap\calu_{\alpha_2}$ with two copies of $\R^m$. On non-empty overlaps, the identification is
\begin{equation}  \label{trancotangent2}
   (x_{\alpha_1},p)\sim(x_{\alpha_2},p') \quad\Leftrightarrow \quad \begin{cases}
  x_{\alpha_1}=x_{\alpha_2} \in \calu_{\alpha_1}\cap\calu_{\alpha_2} \\
    p'=(\dx(x\circ\inv{x'}))^\mathrm{ad}(p)                 \end{cases} ,
\end{equation}
where $x,x'$ denote respectively the chart maps in $\calu_{\alpha_1},\calu_{\alpha_2}$. It follows that $\sim$ defines an equivalence relation and thus a quotient topology on a coset space of the above charts; in this way the cotangent bundle becomes in earnest a Hausdorff second-countable manifold, of class C$^{k-1}$ and dimension $2m$.

We see now that a definition with $p,p'$ replaced by $v,v'$ and $v'=\dx(x'\circ\inv{x})(v)$
in the right hand side of \eqref{trancotangent2}, or \eqref{trancotangent1}, is equally consistent and therefore a case to follow. It yields a new Hausdorff second-countable topological space $TM$ and henceforth a differentiable manifold, which is well known as the tangent bundle of $M$. Moreover, for each fixed point $x\in M$, invariantly of the choice of the chart, we may speak of the obvious $m$-dimensional vector spaces $T^*_xM$ and $T_xM$, also embedded submanifolds, and indeed understand $T^*M$ as a linear dual of $TM$.

In the search for natural tensors, the structure of the tangent manifold $TM$ can be considered in the same footing as that of $T^*M$, a {sublime} duality leading to fruitful results rather than to formal symmetry. For instance, we observe that the total space $TM$ does \textit{not} have a natural symplectic structure.

Having recalled the two differentiable structures, we just assume from now on the manifold is of class C$^\infty$ or smooth.

Let us denote also by $\pi$ the bundle projection for the tangent bundle $\pi:TM\lrr M$ of the given $m$-dimensional manifold $M$. Here the charts $(\inv{\pi}(\calu),x^i,v^j)$ and $(\inv{\pi}(\calu'),x'^a,v'^b)$, induced by charts on $M$ and by the respective euclidean coordinates are related by
\begin{equation}  \label{trantangent1}
   \dx x'^a=\papa{x'^a}{x^j}\dx x^j\qquad  v'^a=v^j\papa{x'^a}{x^j}  \qquad
\dx v'^a=v^j\papa{^2x'^a}{x^i\partial x^j}\dx x^i+\papa{x'^a}{x^j}\dx v^j  .
\end{equation}

At this point we recall that if for one atlas of $M$ all $\papa{x'^a}{x^i}$ are constant then we say we have a \textit{flat} structure on $M$. It considerably simplifies the above equations.

$TM$ is always an orientable manifold, though not carrying a preferred top degree form. Indeed, $\dx x'^1\wedge\cdots\wedge\dx x'^m\wedge\dx v'^1\wedge\cdots\wedge\dx v'^m$ transform on chart overlaps by a positive factor: the square of $\det(\partial x'^a/\partial x^j)$.

Coordinate vector fields on $TM$ verify
\begin{equation}   \label{trantangent2}
  \papa{}{x'^a}= \papa{x^i}{x'^a}\papa{}{x^i}+v'^b\papa{^2x^j}{x'^a\partial x'^b}\papa{}{v^j}
\quad\qquad  \papa{}{v'^a}=\papa{x^i}{x'^a}\papa{}{v^i} ,
\end{equation}
transforming coherently with any third chart, as required. To see the latter, we use
\begin{equation}    \label{equacaoutil}
   v'^b\papa{x'^a}{x^k}\papa{^2x^j}{x'^a\partial
x'^b}+v^l\papa{x^j}{x'^a}\papa{^2x'^a}{x^k\partial x^l} =\papa{v^j}{x^k}=0 .
\end{equation}

There is a well-defined ring $\oplus_k\calp_k$ of sums of \textit{degree} $k$ \textit{polynomial functions along the fibre}, consisting of the smooth $f:TM\lrr\R$ such that $\papa{^{k+1}f}{v^{i_1}\cdots\partial v^{i_{k+1}}}=0$, for all sets of indices, and having a non-vanishing order $k$ derivative in some chart.

Sections of $T^*M$ over $M$ are called \textit{covariant fields} or 1-\textit{forms} on $M$ while sections of $TM$ are the \textit{contravariant} or \textit{vector fields} on $M$. The respective $\cinf{M}$-modules of sections are denoted by $\Omega^1_M$ and $\XIS_M$, as it is well-known\footnote{For a matter of coherence of this text, we recall here that any vector field $X$ induces a flow of local diffeomorphisms of $M$ and then a Lie bracket operator $\call_X:\XIS\rr\XIS$, which gives $\XIS$ a Lie algebra structure. From 1-forms we can produce $p$-forms $\omega\in\Omega^p_M$ algebraically on the manifold, ie. via exterior algebra. We may further differentiate these tensors, either through exterior derivative operator $\dx$, through Lie derivative $\call_X$ along the flow of $X$ or through interior product $\ip_X$. The three satisfy a Leibinz rule on the so called wedge product, with or without signs attached. For the Lie derivative applied to $p$-forms there are no signs. Finding the derivatives on 0,1 and 2-forms, one proves by induction a famous Cartan formula: $\call_X\omega=\dx \ip_X\omega+\ip_X\dx\omega$.}.

We may write at each point $(x^i,v^j)$ the lifts
\begin{equation} \label{levantamentotrivial}
  \papa{}{x^i}=\partial_{x^i}\qquad\qquad\papa{}{v^i}=\partial_{v^i} =\pi^\star\partial_{x^i} .
 \end{equation}
Under the {vertical lift} $\pi^\star$, indeed every $X\in\XIS_M$ lifts globally to $\pi^\star X\in\Gamma(TM;V)$, ie. a vector field tangent to the fibres: if locally $X=X^i\papa{}{x^i}$, then
\begin{equation}  \label{levantamentovertical}
   \pi^\star X=X^i\papa{}{v^i}
\end{equation}
is a well-defined global vertical vector field on $TM$. Notice $X^i$ stands in place of $\pi^*X^i=X^i\circ\pi$. Moreover, the vertical lift is tensorial, we may extend it to $(0,q)$-tensors. Notice throughout the text that $\pi^*$ and $\pi^\star$ shall have very precise meanings.

It is trivial to describe the vector bundle epimorphism $\dx\pi:TTM\lrr\pi^*TM$ in coordinates, as well as the inclusion map $\pi^\star TM\,\subset TTM$, since locally we have frames of the type
$\papa{}{x^1},\ldots,\papa{}{x^m},\papa{}{v^1},\ldots,\papa{}{v^m}$.

The {vertical bundle} is the vector subbundle $V=\ker\dx\pi=\pi^\star
TM\,\subset TTM$. Locally $V$ is spanned by the $\partial_{v^i}$ 

Notice that $p$-forms also have a natural lift, which is the pull-back. Finally, the tensors of type $(p,q)$ on $M$ admit what is called the vertical lift (giving precedence to the contravariant side): given $\alpha$ of type $(p,0)$ and $Q$ of type $(0,q)$, we define
\begin{equation}\label{tensoredverticallift}
   (\alpha\otimes Q)^\star=\pi^*\alpha\,\otimes \pi^\star Q .
\end{equation}


\subsection{Complete lifts}


A distinct property of tangent bundles seems to be that of the complete lift of a vector field on $M$ to $TM$ or, indeed, the complete lift of any tensor on $M$ to $TM$. It was almost a century ago that the notion of extended or complete lift was introduced in the theory of foliations. To the best of our knowledge, the relation of the extended vector fields with tangent manifolds occurs first in \cite{Sasa}. There, Sasaki also defines the extension of 1-forms, which are later thoroughly treated in \cite{IshiharaYano}. We recall here some of those results.

Given $X\in\XIS_M$, the \textit{extended} or \textit{complete lift} of $X$ to a vector field on $TM$ is given in a standard chart by
\begin{equation}  \label{levantamentocompleto}
   \widetilde{X}=X^i\papa{}{x^i}+v^j\papa{X^i}{x^j}\papa{}{v^i} .
\end{equation}
This is a well-defined global vector field; to see this, one uses again identity \eqref{equacaoutil}.

Vertical lifts are linearly independent of complete lifts. Since $\widetilde{\papa{}{x^i}}=\papa{}{x^i}$, the complete lifts together with the vertical lifts of a coordinate frame of $M$ become a frame of $TM$. Moreover, this is true for any given frame on $M$.


The complete lift $\widetilde{X}$ of a vector field $X\in\XIS_M$ is the 1st-order part of the differential map of a given diffeomorphism of $M$. One verifies that
\begin{equation}
 X=\frac{\dx}{\dx t}\phi_t(x) \quad\Leftrightarrow\quad \widetilde{X}=\frac{\dx}{\dx t}\dx\phi_t(x,v) .
\end{equation}
The following identities hold for any vector fields $X,Y$ on $M$:
\begin{equation}\label{parentesisdeLieentrecanonicos}
 [\widetilde{X},\widetilde{Y}]=\widetilde{[X,Y]},\qquad [\widetilde{X},\pi^\estrela Y]=\pi^\estrela[X,Y],\qquad
[\pi^\estrela X,\pi^\estrela Y]=0.
\end{equation}

The lift of a function $f$ on $M$ to a function on $TM$ is obvious. We use the notation $\pi^*f$ only if necessary. These functions also called the constant along the fibre functions.

The \textit{complete lift} of a function $f\in \mathrm{C}^k_M$ is given by $\widetilde{f}$ the function on $TM$ defined by
\[ \widetilde{f}(x,v)=\dx f(x^i,v^j)=v^j\papa{f}{x^j}.\]
In particular, $\widetilde{x^k}=v^k$.
The operator is linear on functions and satisfies an obvious Leibniz rule: $\widetilde{fg}=\widetilde{f}\,g+f\,\widetilde{g}$.

Given a $p$-form $\alpha\in\Omega^p_M$, recall the pull-back  $\pi^*\alpha$ is a $p$-form on $TM$, which vanishes on any vertical directions\footnote{The pull-back is called the vertical lift in \cite{IshiharaYano}.}. For any $X,Y \in\XIS_M$ and any $f\in\mathrm{C}^k_M$ constant along the fibre,
\[ \qquad (\pi^\star X)\cdot f=0,\qquad \pi^*\alpha(\pi^\star Y)=0,\qquad
                                  \widetilde{\partial_{x^i}}=\partial_{x^i}, \]
\[  \widetilde{X}\cdot f=\pi^*(X\cdot f) \qquad \pi^*\alpha(\widetilde{X})=\pi^*(\alpha(X)) . \]
\[  \widetilde{X}\cdot\widetilde{f}=\widetilde{X\cdot f}\qquad\qquad
\widetilde{fX}=f\widetilde{X}+\widetilde{f}\pi^\star X . \]

Notice the tensoriality of the following map defined on $\XIS_M^2$ by
\[  \widetilde{X\otimes Y}=\widetilde{X}\otimes \pi^\star Y+\pi^\star X\otimes\widetilde{Y} . \]

One also defines the \textit{complete lift} of a 1-form $\alpha\in\Omega^1_M$. Indeed, in a chart $(x^i,v^j)$, with $\alpha=a_i\dx x^i$,
\begin{equation}\label{completelift_oneforms}
 \widetilde{\alpha}=v^j\papa{a_i}{x^j}\dx x^i+a_i\dx v^i.
\end{equation}
Once again, \eqref{equacaoutil} proves this definition is independent of the choice of chart. Since $\pi$ is a submersion, both pull-back and complete lift operators are injective.

The following identities are satisfied:
\[  \widetilde{\alpha}(\pi^\star X)=\pi^*(\alpha(X))= \pi^*\alpha(\widetilde{X}) \qquad\quad
\widetilde{\alpha}(\widetilde{X})=\widetilde{\alpha(X)}  \]
\[  \dx\widetilde{f}=\widetilde{\dx f}   \]
\[   \widetilde{f\alpha}=\widetilde{f}\pi^*\alpha+f\widetilde{\alpha} .  \]

Due to three Leibniz rules involving the complete lift, we may define more generally the \textit{complete lift} of a tensor product, for any $P,Q$ tensors on $M$, by, cf. \eqref{tensoredverticallift},
\begin{equation}  \label{completelift_tensorial}
 \widetilde{P\otimes Q}= \widetilde{P}\otimes Q^\star+P^\star\otimes\widetilde{Q} .
\end{equation}
For example, $\widetilde{1_M}=(\dx x^i\otimes\partial_{x^i})^\sim=\dx v^i\otimes\partial_{v^i}+\dx x^i\otimes\partial_{x^i}=1_{TM}$.

It follows that for $\alpha\in\Omega^p_M,\beta\in\Omega^q_M$, we have the $(p+q)$-form on $TM$
\begin{equation}
\widetilde{\alpha\wedge\beta}= \widetilde{\alpha}\wedge\pi^*\beta+\pi^*\alpha\wedge\widetilde{\beta} .
 \end{equation}
Admitting  $\widetilde{\dx\alpha}=\dx\widetilde{\alpha}$, for $p,q$-forms, and recalling $\dx\pi^*=\pi^*\dx$, we see as an induction step that
\begin{equation*}
 \begin{split}
  \dx (\widetilde{\alpha\wedge\beta}) & =
   \widetilde{\dx\alpha}\wedge\pi^*\beta+(-1)^p\widetilde{\alpha}\wedge\pi^*{\dx\beta}
  + \pi^*{\dx\alpha}\wedge\widetilde{\beta}+(-1)^p\pi^*\alpha\wedge\widetilde{\dx\beta}\\
  & =\widetilde{\dx\alpha\wedge\beta}+(-1)^p\widetilde{\alpha\wedge\dx\beta} \\
  & =({\dx(\alpha\wedge\beta) })^\sim
 \end{split}
\end{equation*}
and therefore, for any differential form, $\widetilde{\dx\alpha}=\dx\widetilde{\alpha}$.

All the above results are deduced in \cite{IshiharaYano}.

\begin{prop} \label{Lemmaanulamentolevantamentocompleto}
\begin{enumerate}[label=(\roman*)]
 \item For any function $f$ on $M$, we have $\widetilde{f}$ is a constant iff $\widetilde f=0$ iff $f$ is a constant.
 \item For any $p\geq1$ and $\omega\in\Omega^p_M$ we have $\widetilde{\omega}=0$ iff $\omega=0$.
\end{enumerate}
\end{prop}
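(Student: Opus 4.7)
The plan is to work in a standard chart $(x^i,v^j)$ on $TM$ and read the vanishing of $f$ or $\omega$ on $M$ directly off the explicit chart formulas for their complete lifts, exploiting the fact that on $TM$ these lifts live in a rather rigid part of the tensor algebra (classified by the number of $\dx v$'s they carry).

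For part (i), the formula $\widetilde{f}(x,v)=v^j\papa{f}{x^j}$ makes matters transparent. Evaluating at $v=0$ in any chart shows that a constant value of $\widetilde{f}$ must be $0$, which collapses the three equivalent conditions to two. The implication $\widetilde{f}=0\Rightarrow f$ constant then follows because, fixing $x$ and letting $v$ vary over $\R^m$, the identity $v^j\papa{f}{x^j}(x)=0$ forces $\papa{f}{x^j}(x)=0$ for each $j$, so $f$ is locally constant on $M$. The reverse implication is immediate from the formula.

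For part (ii), the key is a bigrading by the number of $\dx v$'s. Writing $\omega=\frac{1}{p!}\omega_{i_1\ldots i_p}\,\dx x^{i_1}\wedge\cdots\wedge\dx x^{i_p}$ in a chart and applying \eqref{completelift_tensorial} inductively, together with $\widetilde{\dx x^i}=\dx v^i$, I expect to obtain
\[
\widetilde{\omega}\;=\;\frac{1}{p!}\widetilde{\omega_{i_1\ldots i_p}}\,\dx x^{i_1}\wedge\cdots\wedge\dx x^{i_p}\;+\;\sum_{j=1}^m\dx v^j\wedge\pi^*(\ip_{\partial_{x^j}}\omega).
\]
In particular, no monomial in $\widetilde{\omega}$ carries two or more $\dx v$'s. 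Since the two summands above lie in distinct bidegree components of $\Lambda^pT^*_{(x,v)}TM$, the hypothesis $\widetilde{\omega}=0$ forces the one-$\dx v$ piece to vanish separately; that piece encodes $\pi^*(\ip_{\partial_{x^j}}\omega)=0$ for each $j$, and since $\pi$ is a submersion this yields $\ip_{\partial_{x^j}}\omega=0$ for every $j$ on the chart, hence $\omega=0$ there. As charts cover $M$, $\omega=0$ globally.

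The step requiring most care is the combinatorial collapse producing the clean interior-product formula above: each of the $p$ slots in which a $\dx v^{i_k}$ may be inserted contributes a term with sign $(-1)^{k-1}$, and only after invoking the antisymmetry of $\omega_{i_1\ldots i_p}$ do these $p$ terms merge into the single sum $\sum_j\dx v^j\wedge\pi^*(\ip_{\partial_{x^j}}\omega)$. Once that bookkeeping is in place, both parts of the proposition follow immediately.
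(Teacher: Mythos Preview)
Your proof is correct and follows essentially the same approach as the paper: expand $\widetilde\omega$ in a chart and read off the coefficients $o_{i_1\cdots i_p}$ from the terms carrying exactly one $\dx v$. Your repackaging of that one-$\dx v$ piece as $\sum_j \dx v^j\wedge\pi^*(\ip_{\partial_{x^j}}\omega)$ is a tidy flourish, but the paper simply displays the same expansion with ordered indices $i_1<\cdots<i_p$ and observes directly that the monomials $\dx x^{i_1}\wedge\cdots\wedge\dx v^{i_k}\wedge\cdots\wedge\dx x^{i_p}$ are linearly independent, so each coefficient must vanish.
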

\begin{proof}
For (i) one evaluates $\widetilde{f}=v^j\papa{f}{x^j}$ at a canonical basis associated to a chart. For
(ii), writing $\omega=o_{i_1\cdots i_p}\dx x^{i_1}\wedge\cdots\wedge\dx x^{i_p}$ with $i_1<i_2<\cdots<i_p$, we have
 \begin{equation*}
  \begin{split}
    \widetilde{\omega} =v^j\papa{o_{i_{i_1\cdots i_p}}}{x^j}\dx x^{i_1}\wedge\cdots\wedge\dx x^{i_p}+ \hspace{53mm} \\
    o_{i_1\cdots i_p}(\dx v^{i_1}\wedge\dx x^{i_2}\wedge\cdots\wedge\dx x^{i_p}+\dx x^{i_1}\wedge\dx v^{i_2}\wedge\cdots\wedge\dx x^{i_p}+\cdots+\dx x^{i_1}\wedge\cdots\wedge\dx x^{i_{p-1}}\wedge\dx v^{i_p})
  \end{split}
 \end{equation*}
 and the result follows.
\end{proof}

\subsection{Cohomology with complete lifts}

The cohomology space $H^p(M,\R)=\{\alpha\in\Omega^p_M:\ \dx\widetilde{\alpha}=0 \}/\{ \dx\gamma: \ \gamma\in\Omega^{p-1}_M\}$ can be thought of coherently, since $\dx(\dx\gamma)^\sim=(\dx\dx\gamma)^\sim=0$. However, proposition \ref{Lemmaanulamentolevantamentocompleto} implies it is just the de\,Rham cohomology of $M$, as notation sugests.

We have another cohomology arising from the complete lifts $\Omega^p_M\hookrightarrow\Omega^p_{TM}$.
\begin{prop}
 For any $0\leq p\leq m$, there exists a linear epimorphism induced by the complete lift
 \begin{equation}
  H^p(M,\R)\lrr H^p(TM)^\sim:=\frac{\{\widetilde{\alpha}:\ \alpha\in\Omega^p_M,\ \,\dx\alpha=0\}}{\{\dx\beta:\ \beta\in\Omega^{p-1}_{TM} \}}.
 \end{equation}
 Moreover, $H^0(TM)^\sim=H^1(TM)^\sim=0$. The morphism is not injective in general.
\end{prop}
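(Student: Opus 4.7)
The plan is to build the map $[\alpha]\mapsto[\widetilde\alpha]$ and then verify the four separate claims (well-definedness, surjectivity, vanishing in degrees $0$ and $1$, and failure of injectivity) using the identities collected earlier in the excerpt.

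First I would observe that the complete lift sends closed forms to closed forms and exact forms to exact forms, thanks to the already-proven identity $\widetilde{\dx\alpha}=\dx\widetilde{\alpha}$. Explicitly, if $\alpha=\dx\gamma$ with $\gamma\in\Omega^{p-1}_M$, then $\widetilde{\alpha}=\widetilde{\dx\gamma}=\dx\widetilde{\gamma}$ sits in the denominator of $H^p(TM)^\sim$, so the assignment $[\alpha]_{dR}\mapsto[\widetilde\alpha]$ is well defined and $\R$-linear. Surjectivity is immediate: by the very definition of $H^p(TM)^\sim$ every class has a representative of the form $\widetilde\alpha$ with $\alpha\in\Omega^p_M$ closed, so it is the image of $[\alpha]_{dR}$.

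For the vanishing in degree $0$, a closed $0$-form is a locally constant function $f$, and by Proposition \ref{Lemmaanulamentolevantamentocompleto}(i) one has $\widetilde f=v^j\partial f/\partial x^j=0$, so the numerator itself reduces to $0$. For the vanishing in degree $1$, the key idea is to produce an explicit primitive on $TM$ for each closed $\widetilde\alpha$. Given $\alpha=a_i\dx x^i\in\Omega^1_M$, define the function
\[
 g\in\cinf{TM},\qquad g(x,v)=a_i(x)\,v^i,
\]
which is globally well defined because $a_i$ transforms dually to $v^i$ under \eqref{trantangent1}. A direct computation gives
\[
 \dx g=v^i\papa{a_i}{x^j}\dx x^j+a_i\dx v^i,
\]
and using the closedness hypothesis $\partial a_j/\partial x^i=\partial a_i/\partial x^j$ this is exactly the expression \eqref{completelift_oneforms} for $\widetilde\alpha$. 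Hence $\widetilde\alpha=\dx g$ is exact on $TM$, which shows $H^1(TM)^\sim=0$.

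Finally, non-injectivity follows by exhibiting any $M$ with $H^1(M,\R)\neq 0$: the target $H^1(TM)^\sim$ is zero by the previous step while the source is non-trivial, so the morphism has a non-zero kernel. The circle $M=\Sphere^1$ with $\alpha=\dx\theta$ (closed but not exact on $M$, while $\widetilde{\dx\theta}=\dx(v\cdot 1)=\dx v$ is exact on $T\Sphere^1$) gives the simplest explicit witness. I expect the only delicate point to be verifying that $g=a_iv^i$ is intrinsically defined on $TM$, but this reduces to the elementary contraction $\dx x^i\otimes\partial_{v^i}$ being coordinate-invariant, which is the same identity behind the well-definedness of the vertical lift in \eqref{levantamentovertical}.
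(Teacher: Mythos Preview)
Your proof is correct and follows essentially the same route as the paper: well-definedness via $\widetilde{\dx\alpha}=\dx\widetilde\alpha$, surjectivity by definition, and for $p=1$ the explicit primitive $F=a_iv^i$ together with the symmetry $\partial_{x^i}a_j=\partial_{x^j}a_i$ coming from $\dx\alpha=0$. The only cosmetic differences are that the paper phrases the $p=1$ step as solving the PDE system $\partial F/\partial x^i=v^j\partial a_i/\partial x^j$, $\partial F/\partial v^i=a_i$, and that for non-injectivity it uses $M=\R^2\setminus\{0\}$ with $\omega=(-y\,\dx x+x\,\dx y)/(x^2+y^2)$ rather than your $\Sphere^1$ example.
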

\begin{proof}
 We have seen the map is well defined. Of course it is surjective. For the case $p=1$, suppose the form $\alpha=a_i\dx x^i$ on $M$ is closed. Hence $\partial_{x^i}a_j=\partial_{x^j}a_i$. We have $\widetilde{\alpha}=v^j\papa{a_i}{x^j}\dx x^i+a_i\dx v^i$ and $\widetilde{\alpha}=\dx F$ is equivalent to
 \[  \papa{F}{x^i}=v^j\papa{a_i}{x^j} \qquad\ \papa{F}{v^i}=a_i \]
 Immediately we find $F=a_iv^i+c$ where $c$ is a constant.
\end{proof}
Let us see an example. Let $M=\R^2\backslash\{(0,0)\}$ have coordinates $(x,y)$ and $TM=M\times\R^2$ coordinates $(x,y,v,w)$. The well-known 1-form $\omega=\frac{-y\dx x+x\dx y}{x^2+y^2}$ is closed and not exact; likewise, $\widetilde{\omega}$ on $TM$ is exact:
\[ \widetilde{\omega}=\dx\bigl(\frac{xw-yv}{x^2+y^2}\bigr) . \]

It is worthless searching for any case of a non-vanishing $H^p(TM)^\sim\subset H^p(TM)$ with dimension strictly less than $\dim H^p(M)$. The question is solved below.

\subsection{The tautological vector field}

Let us introduce one of the natural tensors on $TM$, the so-called \textit{tautological}, \textit{canonical} or \textit{Liouville} vector field $\xi$. At each point $u=(x^i,v^j)$,
\begin{equation}
   \xi=\sum_{i=1}^m  v^i\papa{}{v^i} .
\end{equation}
We may write simply  $\xi_u=\pi^\star u$ for this globally defined vertical vector field. In the context of vector bundle theory some authors call $\xi$ the Euler vector field.

\begin{prop}
For any $p$-form $\alpha$ on $M$:
\begin{equation}
 \mbox{(i)}\ \:\xi\lrcorner\pi^*\alpha=0 \qquad\qquad\qquad
 \mbox{(ii)}\ \:\call_\xi\pi^*\alpha=0
\end{equation}
\end{prop}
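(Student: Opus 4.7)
The plan is to exploit the single fact that the Liouville vector field $\xi=v^i\partial_{v^i}$ is \emph{vertical}, i.e.\ $\xi\in\Gamma(TTM;V)$, together with the standard fact that the pull-back by a submersion annihilates vertical directions.

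For part (i), I would write $\alpha=a_{i_1\cdots i_p}\dx x^{i_1}\wedge\cdots\wedge\dx x^{i_p}$ on a chart and recall that $\pi^*\alpha=(a_{i_1\cdots i_p}\circ\pi)\,\dx x^{i_1}\wedge\cdots\wedge\dx x^{i_p}$, where each $\dx x^{i_k}$ is really $\pi^*\dx x^{i_k}$ and so vanishes on every $\partial_{v^j}$. Contracting $\xi=v^j\partial_{v^j}$ into the first slot therefore annihilates each wedge factor, yielding zero. More conceptually, since $\dx\pi(\xi)=0$ (as is immediate from \eqref{trantangent2}), and for any vertical $W$ one has $(\pi^*\alpha)(W,\cdot,\ldots,\cdot)=\alpha(\dx\pi(W),\ldots)=0$, the interior product $\xi\lrcorner\pi^*\alpha$ vanishes identically on $TM$.

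For part (ii), I would invoke Cartan's magic formula recalled in the footnote,
\[
\call_\xi\pi^*\alpha=\dx(\xi\lrcorner\pi^*\alpha)+\xi\lrcorner\dx(\pi^*\alpha).
\]
The first term is zero by (i). For the second, use $\dx\pi^*=\pi^*\dx$ so that $\dx(\pi^*\alpha)=\pi^*(\dx\alpha)$ is again the pull-back of a form on $M$; by the very same verticality argument, $\xi\lrcorner\pi^*(\dx\alpha)=0$. Combining gives $\call_\xi\pi^*\alpha=0$.

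There is no real obstacle here: the whole proposition is a direct consequence of verticality of $\xi$ plus Cartan's formula. The only thing to be careful about is the convention that $\dx x^i$ on $TM$ stands for $\pi^*\dx x^i$, which the excerpt has already made explicit.
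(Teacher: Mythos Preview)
Your proof is correct and follows essentially the same approach as the paper: part (i) uses that $\xi$ lies in $\ker\dx\pi$ so that $\pi^*\alpha(\xi,\cdot,\ldots,\cdot)=0$, and part (ii) combines Cartan's formula with $\dx\pi^*=\pi^*\dx$ and part (i) applied to $\dx\alpha$. Your additional local-coordinate verification for (i) is redundant but harmless.
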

\begin{proof}
 (i) is clear since $\xi\lrcorner\pi^*\alpha(X_1,\ldots,X_{p-1})= \pi^*\alpha(\xi,X_1,\ldots,X_{p-1})=0$ for any vectors $X_1,\ldots,X_{p-1}\in TTM$ and  $\xi$ lies in $\ker\dx\pi$. For (ii) we recall formulae $\dx\pi^*\alpha=\pi^*\dx\alpha$ and that of Cartan, giving $\call_\xi\pi^*\alpha=\dx(\xi\lrcorner\pi^*\alpha)+\xi\lrcorner\pi^*\dx\alpha=0$.
 \end{proof}
 The following result is original and surprising thus we write it separately.
 \begin{teo} \label{teo_Lieisidentitty}
  For any $p$-form $\alpha$ on $M$:
  \begin{equation}
    \call_\xi\widetilde{\alpha}=\widetilde{\alpha}.
  \end{equation}
In particular, every closed form $\alpha$ on $M$ yields an exact form $\widetilde{\alpha}=\dx(\xi\lrcorner\widetilde{\alpha})$ on $TM$. And thus, for all $0\leq p\leq m$,
 \[  H^p(TM)^\sim=0. \]
 \end{teo}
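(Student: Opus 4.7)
The plan is to establish the identity $\call_\xi\widetilde{\alpha}=\widetilde{\alpha}$ directly in coordinates, and then deduce the cohomological statement immediately via Cartan's formula and the already-established commutation $\dx\widetilde{\alpha}=\widetilde{\dx\alpha}$. Writing $\alpha=a_{i_1\cdots i_p}\dx x^{i_1}\wedge\cdots\wedge\dx x^{i_p}$ and extending the formula \eqref{completelift_oneforms} through \eqref{completelift_tensorial}, one has
\[
\widetilde{\alpha}=v^j\papa{a_{i_1\cdots i_p}}{x^j}\dx x^{i_1}\wedge\cdots\wedge\dx x^{i_p}
+\sum_{k=1}^p a_{i_1\cdots i_p}\dx x^{i_1}\wedge\cdots\wedge\dx v^{i_k}\wedge\cdots\wedge\dx x^{i_p}.
\]
The key observation is that every monomial above carries exactly one ``vertical'' factor: either the single scalar $v^j$ in the first sum, or the single differential $\dx v^{i_k}$ in each summand of the second.

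First I would record the action of $\call_\xi$ on the generators. Since $\xi=v^i\partial_{v^i}$ annihilates functions that are constant along the fibres (in particular the $a_{i_1\cdots i_p}$), and satisfies $\xi\cdot v^j=v^j$, one has $\call_\xi\dx x^i=\dx(\xi\cdot x^i)=0$ and $\call_\xi\dx v^i=\dx(\xi\cdot v^i)=\dx v^i$. Applying the Leibniz rule for $\call_\xi$ on wedge products, each of the monomials in $\widetilde{\alpha}$ is an eigenvector with eigenvalue $1$, and summing yields $\call_\xi\widetilde{\alpha}=\widetilde{\alpha}$. A more conceptual variant, which I would mention as a cross-check, is that $\widetilde{\alpha}$ is homogeneous of degree one under the fibrewise dilation $m_s(x,v)=(x,sv)$; since $\xi$ generates the one-parameter group $m_{e^t}$, the identity $m_s^*\widetilde{\alpha}=s\widetilde{\alpha}$ differentiated at $s=1$ gives the same conclusion.

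Once the Lie derivative identity is in hand, the cohomological consequence is a one-line deduction. For any closed $\alpha\in\Omega^p_M$, Cartan's formula gives
\[
\widetilde{\alpha}=\call_\xi\widetilde{\alpha}=\dx(\xi\lrcorner\widetilde{\alpha})+\xi\lrcorner\dx\widetilde{\alpha}=\dx(\xi\lrcorner\widetilde{\alpha}),
\]
because $\dx\widetilde{\alpha}=\widetilde{\dx\alpha}=0$. Thus $\widetilde{\alpha}$ is exact on $TM$, and by definition of $H^p(TM)^\sim$ the class $[\widetilde{\alpha}]$ vanishes for every $0\leq p\leq m$.

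I do not anticipate a serious obstacle: the whole argument is essentially bookkeeping of the Leibniz rule plus Cartan. The only thing requiring a little care is keeping track of the two distinct kinds of terms in $\widetilde{\alpha}$ and verifying that both contribute eigenvalue exactly $1$ (rather than, say, $2$ or the form-degree $p$); this is precisely where the ``single vertical factor per monomial'' observation from the first paragraph does the work.
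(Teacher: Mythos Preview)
Your proof is correct. Your route differs slightly from the paper's: the paper verifies the identity for $p=0$ and $p=1$ by direct calculation and then inducts on degree using the product formula $\widetilde{\alpha\wedge\beta}=\widetilde{\alpha}\wedge\pi^*\beta+\pi^*\alpha\wedge\widetilde{\beta}$ together with the previously established $\call_\xi\pi^*\beta=0$, whereas you bypass the induction entirely by writing the full coordinate expansion of $\widetilde{\alpha}$ for arbitrary $p$ and applying the Leibniz rule monomial by monomial. Both arguments rest on the same elementary facts ($\call_\xi\dx x^i=0$, $\call_\xi\dx v^i=\dx v^i$, $\call_\xi v^j=v^j$), so the distinction is organizational rather than substantive; your version is marginally more self-contained since it does not invoke the earlier proposition on $\call_\xi\pi^*\alpha$, while the paper's induction makes the compatibility with the algebra structure of complete lifts more transparent. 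Your cross-check via fibrewise homogeneity $m_s^*\widetilde{\alpha}=s\,\widetilde{\alpha}$ is a nice conceptual reformulation that does not appear in the paper.
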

\begin{proof}
 We prove first the case $p=0$. For any smooth function $f$ on $M$, $\widetilde{f}=v^j\papa{f}{x^j}$, and so $\call_\xi\widetilde{f}=\xi\lrcorner\dx\widetilde{f}=\xi\lrcorner(v^j\papa{^2f}{x^ix^j}\dx x^i+\papa{f}{x^j}\dx v^j)=\widetilde{f}$. And we prove for $p=1$. Let $\alpha=a_i\dx x^i$, so that we have immediately $\widetilde{\alpha}=v^k\papa{a_i}{x^k}\dx x^i+a_i\dx v^i$ and
\[ \dx\widetilde{\alpha} =v^k\papa{^2a_i}{x^kx^j}\dx x^j\wedge\dx x^i+\papa{a_i}{x^j}(\dx v^j\wedge\dx x^i+\dx x^j\wedge\dx v^i). \]
Hence $\xi\lrcorner\dx\widetilde{\alpha}=v^k\papa{a_i}{x^k}\dx x^i-v^i\papa{a_i}{x^j}\dx x^j$.
We see also that $\xi\lrcorner\widetilde{\alpha}=v^ia_i$ and therefore $\dx(\xi\lrcorner\widetilde{\alpha})=v^i\papa{a_i}{x^j}\dx x^j+a_i\dx v^i$. Combining the two we find $\call_\xi\widetilde{\alpha}=a_i\dx v^i+v^k\papa{a_i}{x^k}\dx x^i=\widetilde{\alpha}$. We see locally for a product of $p,q$-forms
\begin{equation*}
 \begin{split}
  \call_\xi\widetilde{\alpha\wedge\beta}&=\call_\xi(\widetilde{\alpha}\wedge\pi^*\beta+\pi^*\alpha\wedge\widetilde{\beta})\\
  &= \call_\xi\widetilde{\alpha}\wedge\pi^*\beta+ \pi^*\alpha\wedge\call_\xi\widetilde{\beta} \\
  &= \widetilde{\alpha}\wedge\pi^*\beta+\pi^*\alpha\wedge\widetilde{\beta}=\widetilde{\alpha\wedge\beta}
 \end{split}
\end{equation*}
and the result follows by induction.
\end{proof}

 \subsection{The mirror map}


In previous works the author has given the name of $B$-{\textit{field}} or {\textit{mirror map}} to the global tensor of type $(1,1)$ defined in any chart by
\[   B=\sum_{j=1}^m \dx x^j\otimes \papa{}{v^j} .  \]
We can only speculate which tensor $B$ or $\xi$ defined on $TM$ is the dual of the Liouville 1-form on the cotangent manifold.

The tensor $B$ is recurrent in the last decades research thriving between differential geometry and Hamiltonian mechanics, cf. for example \cite{CGGM,Crampin1,FLMMR} and their bibliography. This is just as one would expect. $B$ is called the `vertical endomorphism' in \cite{CGGM,LecandaRoy} or the  `canonical endomorphism' in \cite{LecandaRoy,LeonLAinzGordonMarrero} or `the vector 1-form defining the tangent structure' in \cite{AnonaRatovoarimanana}, just to show a few recent appearences. Interesting enough, the tensor $B$ is also an example of a null structure as defined in \cite{Dunajski}.

The first ever to use the mirror map seemed, on a first notice, to have been \cite{Klein}. And then again quite explicitly in \cite{KleinVoutier}, where it was called the `natural almost-tangent structure on $TM$'. Early studies focused on the almost-tangent structure are worth mentioning, such as \cite{DaviesYano,Eliopoulos,Grifone}, interested in the theory of sprays and connections and metric structures.

However, we also found that the notion of almost-tangent structure was established  before in \cite{ClaBruck2}: a manifold endowed with a constant rank (1,1) tensor with vanishing square. Such authors were well aware of an \textit{almost-tangent structure}, precisely the $B$-field or mirror map,  as a particular case appearing over the tangent manifold of any given manifold. This is clearly seen in \cite{ClaBruck1}.

The present author recurred to the mirror endomorphism independently, and has coined this name more than a decade ago in the context of a manifold endowed with an affine connection. He found later that the connection is not required at all for the definition of $B$, cf. \cite{Alb2019b} and the references therein. The affine connection overshadows the notion of the mirror map as a natural object, even though turning it more obvious.

Finally we note that any \textit{horizontal} lift $X^h$ of any given $X\in\XIS_M$ is mirrored through $B$ to the respective vertical lift $B(X^h)=\pi^\estrela X$. Hence the name for $B$. It still seems more likely to the author to use a name for an object which is truly universal, rather than the expressions found in the literature.

\begin{prop}
 The tautological vector field $\xi$ and the mirror map $B$ are natural global, respectively, $(0,1)$- and $(1,1)$-tensor fields defined on $TM$.

 Neither $\xi$ or $B$ are complete lifts of tensors on $M$.
\end{prop}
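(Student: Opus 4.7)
For the first assertion, my plan is a direct change-of-chart verification using the transformation formulas \eqref{trantangent1} and \eqref{trantangent2}. Substituting these into $v'^a\papa{}{v'^a}$ and $\dx x'^a\otimes\papa{}{v'^a}$, the chain-rule contraction $\papa{x'^a}{x^j}\papa{x^i}{x'^a}=\delta^i_j$ collapses the sums to $v^i\partial_{v^i}$ and $\dx x^j\otimes\partial_{v^j}$ respectively, so both tensors are well defined independently of the chart and extend globally to $TM$. Naturality under a diffeomorphism $f:M\rr N$ then follows from the intrinsic descriptions $\xi_u=\pi^\star u$ at $u\in TM$ and $B(X)=\pi^\star(\dx\pi(X))$ for $X\in TTM$: both expressions use only the canonical smooth-structure data and therefore commute with $\dx f:TM\rr TN$.

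For the second assertion I would argue by matching coefficients with respect to the local splitting of $TTM$ generated by the coordinate frames $\partial_{x^i},\partial_{v^j}$. Formula \eqref{levantamentocompleto} shows that the complete lift of any $X=X^i\partial_{x^i}\in\XIS_M$ has horizontal component $X^i(x)\,\partial_{x^i}$, depending only on the base point. Since $\xi=v^i\partial_{v^i}$ has vanishing horizontal component, the equation $\xi=\widetilde{X}$ forces $X^i\equiv0$ for all $i$, and then $\widetilde{X}=0\neq\xi$, contradiction.

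For $B$ the plan is to unfold the complete lift of a general $(1,1)$-tensor $P=P^i_j\,\dx x^j\otimes\partial_{x^i}$ on $M$ using \eqref{completelift_tensorial} together with the scalar Leibniz rule $\widetilde{fT}=\widetilde{f}\,T^\star+f\widetilde{T}$. A careful application yields a sum of three blocks,
\[ \widetilde{P}=P^i_j\,\dx x^j\otimes\partial_{x^i}+P^i_j\,\dx v^j\otimes\partial_{v^i}+v^k\papa{P^i_j}{x^k}\,\dx x^j\otimes\partial_{v^i}, \]
with no $\dx v\otimes\partial_x$ component. Comparing with $B=\dx x^j\otimes\partial_{v^j}$, the vanishing of the $\dx x\otimes\partial_x$ and $\dx v\otimes\partial_v$ blocks of $B$ forces $P^i_j=0$, which then annihilates the remaining block and contradicts $B\neq0$.

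The step that demands most attention is the bookkeeping of the three Leibniz rules in the expansion of $\widetilde{P}$, since one must correctly combine $\widetilde{f\alpha}$, $\widetilde{fX}$ and \eqref{completelift_tensorial} on the three factors. The conceptual payoff is nevertheless transparent: the complete lift of a $(1,1)$-tensor always has its $\dx x\otimes\partial_x$ and $\dx v\otimes\partial_v$ diagonal blocks equal to a copy of $P^i_j$ and its $\dx v\otimes\partial_x$ block identically zero, so any purely off-diagonal tensor such as $B$ — or any nonzero vertical vector field such as $\xi$ — is automatically excluded from the image of the complete-lift construction.
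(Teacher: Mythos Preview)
Your argument is correct. The paper states this proposition without proof, so there is nothing to compare against; your coordinate verification of global well-definedness and your block-by-block elimination for the non-complete-lift claims are exactly the kind of direct computation the author presumably had in mind. Your expansion of $\widetilde{P}$ is consistent with the paper's own example $\widetilde{1_M}=\dx v^i\otimes\partial_{v^i}+\dx x^i\otimes\partial_{x^i}$, which confirms the three-block shape you derive.
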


The endomorphism $B:TTM\lrr TTM$ is also given by $B=\pi^\estrela\circ\pi_*$, but $\pi^\estrela$ is not a bundle map. $B$ verifies
\[ B(TTM)=V=\ker\dx\pi=\ker B\ \ \qquad\ \ \ B\circ B=0. \]
\begin{prop}\label{propdeB}
 We have $B(\widetilde{X})=\pi^\estrela X$ and
 \[ \call_{\pi^\estrela X}B=0\qquad\qquad \call_\xi B=-B \qquad\qquad \call_{\widetilde{X}}B=0 ,\ \ \forall X\in\XIS_M .\]
\end{prop}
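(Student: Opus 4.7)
The plan is to reduce everything to the formula $(\call_Y B)(Z) = [Y, B(Z)] - B([Y,Z])$, valid for any $(1,1)$-tensor, and then to exploit the bracket identities \eqref{parentesisdeLieentrecanonicos} together with the tensoriality of $\call_Y$ to avoid unnecessary coordinate work.

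First, I would establish $B(\widetilde X) = \pi^\estrela X$ by a one-line coordinate computation: inserting $\widetilde X = X^i\partial_{x^i}+v^j\tfrac{\partial X^i}{\partial x^j}\partial_{v^i}$ into $B = \dx x^j\otimes\partial_{v^j}$ gives $X^j\partial_{v^j}$, which is $\pi^\estrela X$ by \eqref{levantamentovertical}. Since $B$ kills vertical vectors and $\xi$, $\pi^\estrela X$ are vertical, we also record the useful consequences $B(\pi^\estrela X) = 0$ and $B(\xi) = 0$.

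Next, to prove the three Lie-derivative identities it is enough, by $\cinf{TM}$-linearity of $\call_Y B$ in its argument, to evaluate $(\call_Y B)(Z)$ on vectors $Z$ from a convenient spanning set of $TTM$. For $\call_{\pi^\estrela X}B$ and $\call_\xi B$ I would use the local frame $\{\partial_{x^i},\partial_{v^i}\}$: a short computation shows $[\pi^\estrela X,\partial_{x^i}]=-\tfrac{\partial X^k}{\partial x^i}\partial_{v^k}$ and $[\pi^\estrela X,\partial_{v^i}]=0$, both of which lie in $V=\ker B$, so the resulting bracket terms cancel; similarly $[\xi,\partial_{x^i}]=0$ and $[\xi,\partial_{v^i}]=-\partial_{v^i}$, giving $(\call_\xi B)(\partial_{x^i})=-\partial_{v^i}=-B(\partial_{x^i})$ and $(\call_\xi B)(\partial_{v^i})=0=-B(\partial_{v^i})$.

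For the last identity $\call_{\widetilde X}B=0$, which is the cleanest to present, I would use the spanning set $\{\widetilde Y,\pi^\estrela Y:Y\in\XIS_M\}$ of $TTM$ (guaranteed by the remark right after \eqref{levantamentocompleto}). Applying \eqref{parentesisdeLieentrecanonicos} together with $B(\widetilde Y)=\pi^\estrela Y$ and $B(\pi^\estrela Y)=0$, I obtain
\[
(\call_{\widetilde X}B)(\widetilde Y) = [\widetilde X,\pi^\estrela Y] - B(\widetilde{[X,Y]}) = \pi^\estrela[X,Y] - \pi^\estrela[X,Y] = 0
\]
and
\[
(\call_{\widetilde X}B)(\pi^\estrela Y) = [\widetilde X,0] - B(\pi^\estrela[X,Y]) = 0.
\]
The main (mild) obstacle is verifying that this argument really is valid globally: one must note that $\call_{\widetilde X}B$ is a genuine tensor, hence testing it on a generating family of vector fields suffices, and one must also check that when the test vectors are multiplied by functions on $TM$ (not just on $M$) the identity still reduces to the two cases above — which it does, since any such function factor passes through $\call_{\widetilde X}$ up to an error that is absorbed by the tensoriality of $B$.
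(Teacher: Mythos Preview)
Your argument is correct. The main difference from the paper is one of packaging: the paper computes $\call_WB$ via the Leibniz rule on the tensor product, writing $\call_WB=\call_W\dx x^j\otimes\partial_{v^j}+\dx x^j\otimes[W,\partial_{v^j}]$ and then evaluating each factor in coordinates (for instance $\call_{\widetilde X}\dx x^j=\papa{X^j}{x^k}\dx x^k$ and $\call_{\widetilde X}\partial_{v^j}=-\papa{X^i}{x^j}\partial_{v^i}$, which visibly cancel). You instead use the evaluation formula $(\call_YB)(Z)=[Y,BZ]-B[Y,Z]$ and test on a spanning family. For $\call_{\pi^\estrela X}B$ and $\call_\xi B$ the two approaches amount to the same coordinate brackets; for $\call_{\widetilde X}B$ your choice of the frame $\{\widetilde Y,\pi^\estrela Y\}$ together with \eqref{parentesisdeLieentrecanonicos} gives a genuinely coordinate-free verification, which is a small gain in elegance over the paper's direct computation. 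Your closing worry about multiplying test vectors by arbitrary $\cinf{TM}$ functions is unnecessary: since $\{\widetilde{Y_i},\pi^\estrela Y_i\}$ for a local frame $\{Y_i\}$ of $M$ is already a pointwise frame of $TTM$, tensoriality of $\call_{\widetilde X}B$ alone suffices.
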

\begin{proof}
The first identity is obvious. Now, $\call_WB=\call_W\dx x^j\otimes \partial_{v^j}+\dx x^j\otimes\call_W\partial_{v^j}$ is easy to deduce in most general terms. Giving in particular $\call_{\pi^\estrela X}B=0$ by application of \eqref{parentesisdeLieentrecanonicos}. And also the other identities. Notice that $\call_\xi\partial_{v^j}=[\xi,\partial_{v^j}]=-\partial_{v^j}$. Also $\call_\xi\dx x^j=\dx(\xi\lrcorner\dx x^j)=0$. Then
 \[ \call_\xi B=\call_\xi\dx x^j\otimes\partial_{v^j}+\dx x^j\otimes\call_\xi\partial_{v^j}=-B . \]
 Finally, we have $\call_{\widetilde{X}}\partial_{v^j}=-\papa{X^i}{x^j}\partial_{v^i}$ and $\call_{\widetilde{X}}\dx x^j=\dx(\widetilde{X}\lrcorner\dx x^j)=\papa{X^j}{x^k}\dx x^k$, hence the result.
\end{proof}

As the author has been \textit{proving} since long ago, any set of endomorphisms $B_1,\ldots,B_p$ of a tangent bundle allows us to construct new differential forms from a given differential form $\eta\in\Omega^p$. 
Let us recall that on a product $\eta=\eta_1\wedge\cdots\wedge\eta_p$ of 1-forms, we defined in \cite{Alb2019b}
\begin{equation} \label{defini_etacircwedge}
   \eta\circ(B_1\wedge\cdots\wedge B_p)=\sum_{\sigma\in \mathrm{S}_p} \eta_1\circ B_{\sigma_1}\wedge\cdots\wedge\eta_p\circ B_{\sigma_p} .
\end{equation}
This extends linearly to any $\eta\in\Omega^p$.

A typical application is given in the following result, which motivates this article, in the search for new differential forms.
\begin{prop} \label{nogoProp}
 For any $p$-form $\gamma$ on $M$ and any $0\leq i \leq p$, we have
\begin{equation}
   \frac{1}{p!}\,\widetilde{\gamma}\circ(B^{\wedge,i}\wedge1^{\wedge,p-i})=\begin{cases}
         0 & \mbox{if}\ i\geq2 \\
       \pi^*\gamma &\mbox{if}\ i=1\\
       \widetilde{\gamma} & \mbox{if}\ i=0   .        \end{cases}
\end{equation}
\end{prop}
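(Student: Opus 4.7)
The plan is to reduce the proposition to two 1-form level identities together with a combinatorial count over permutations. For any $\alpha\in\Omega^1_M$, the pull-back satisfies $(\pi^*\alpha)\circ B=0$, because $B$ takes values in the vertical bundle $V=\ker \dx\pi$ on which $\pi^*\alpha$ vanishes; and the complete lift satisfies $\widetilde{\alpha}\circ B=\pi^*\alpha$, using $B=\pi^\estrela\circ\pi_*$ together with the identity $\widetilde{\alpha}(\pi^\estrela X)=\pi^*(\alpha(X))$ recalled earlier in the text. In coordinates these read $\dx x^i\circ B=0$ and $\dx v^i\circ B=\dx x^i$.

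Next, in a chart write $\gamma=a_I\,\dx x^{i_1}\wedge\cdots\wedge\dx x^{i_p}$ (summed over multi-indices) and expand via the Leibniz rules of the previous subsection:
\[ \widetilde{\gamma}\:=\:\widetilde{a_I}\,\pi^*(\dx x^I)+a_I\sum_{k=1}^{p}\pi^*\dx x^{i_1}\wedge\cdots\wedge\dx v^{i_k}\wedge\cdots\wedge\pi^*\dx x^{i_p}. \]
In the first summand every factor is of pull-back type, so by the first identity above any placement of $B$ annihilates it, and it only contributes at $i=0$. In the $k$-th summand of the second group, $p-1$ of the factors are pull-backs and only the $k$-th is a complete lift, hence a permutation $\sigma\in S_p$ of the multiset $\{B^{\times i},1^{\times p-i}\}$ contributes non-trivially only when every $B$ is sent to position $k$. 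This forces $i\leq 1$.

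The remaining combinatorics is then transparent. For $i\geq 2$ the contribution vanishes immediately. For $i=1$ the single $B$ must land at position $k$, and the $p-1$ identities permute freely in $(p-1)!$ ways; each resulting term equals $\pi^*(\dx x^{i_1}\wedge\cdots\wedge\dx x^{i_p})$ by the two 1-form identities of the first paragraph, and summing over $k=1,\ldots,p$ produces the factor $p\cdot(p-1)!=p!$. For $i=0$ there are no $B$'s at all, and the $p!$ permutations of the identities reconstruct $p!\,\widetilde{\gamma}$. Dividing by $p!$ and reassembling with the $\widetilde{a_I}\,\pi^*(\dx x^I)$ piece gives $\widetilde{\gamma}$ at $i=0$, $\pi^*\gamma$ at $i=1$, and $0$ at $i\geq 2$. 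Linearity in $\gamma$ extends the chart computation to arbitrary $p$-forms, and the naturality of $\widetilde{\cdot}$, $\pi^*$ and $B$ ensures the equality is independent of the chart chosen.

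The main obstacle, modest but genuine, is the bookkeeping: one has to keep track of which factor of each summand of $\widetilde{\gamma}$ is of pull-back type versus complete-lift type, and carefully extract the factor $p!$ from the sum over $S_p$ after collapsing to those permutations whose $B$'s all coincide with the unique complete-lift slot. Everything else is a direct consequence of the two identities $\dx x^i\circ B=0$ and $\dx v^i\circ B=\dx x^i$ established at the start.
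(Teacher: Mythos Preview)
Your proof is correct and follows essentially the same route as the paper's: both reduce everything to the two identities $(\pi^*\alpha)\circ B=0$ and $\widetilde{\alpha}\circ B=\pi^*\alpha$ for $1$-forms, expand $\widetilde{\gamma}$ so that each summand has at most one complete-lift factor among pull-backs, and then count the surviving permutations in $S_p$. The only cosmetic difference is that the paper assumes $\gamma=\gamma_1\wedge\cdots\wedge\gamma_p$ is decomposable (absorbing coefficients into the $\gamma_j$, which eliminates your extra $\widetilde{a_I}\,\pi^*(\dx x^I)$ term), whereas you work directly in a chart with $\gamma=a_I\,\dx x^I$; your handling of that additional term is correct and the combinatorics is identical.
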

\begin{proof}
We may assume $\gamma$ is decomposable as above. Then
\begin{eqnarray*}
 \widetilde{\gamma} &=& \widetilde{\gamma_1}\wedge\pi^*(\gamma_2\wedge\cdots\wedge\gamma_p)+\pi^*\gamma_1\wedge(\widetilde{\gamma_2\wedge\cdots\wedge\gamma_p}) \\
 &=&\sum_j\pi^*\gamma_1\wedge\cdots\wedge\widetilde{\gamma_j} \wedge\cdots\wedge
 \pi^*\gamma_p
\end{eqnarray*}
and so
\[ \widetilde{\gamma}\circ(B^{\wedge,i}\wedge1^{\wedge,p-i}) = \sum_j\sum_{\sigma\in\mathrm{S}_p}
 \pi^*\gamma_1\circ B_{\sigma_1}\wedge\cdots\wedge\widetilde{\gamma_j}\circ B_{\sigma_j}\wedge\cdots\wedge\pi^*\gamma_p\circ B_{\sigma_p}
\]
where $B_1=\ldots=B_i=B$ and $B_{i+1}=\ldots=B_p=1$. Now if $i\geq2$, for each $j$ and $\sigma$, some $k\neq j$ will satisfy $\pi^*\gamma_k\circ B_{\sigma_k}=\pi^*\gamma_k\circ B=\gamma_k\circ\pi_*\circ B=0$.

We also note that $\widetilde{\gamma_j}\circ B=\pi^*\gamma_j$. Hence, if $i=1$,
the computation above gives
\begin{eqnarray*}
\widetilde{\gamma}\circ(B^{\wedge,i}\wedge1^{\wedge,p-i}) &=& \sum_{j=1}^p\sum_{\sigma\in\mathrm{S}_p,\,\sigma_j=1}
 \pi^*\gamma_1\circ B_{\sigma_1}\wedge\cdots\wedge\widetilde{\gamma_j}\circ B_{\sigma_j}\wedge\cdots\pi^*\gamma_p\circ B_{\sigma_p} \\
 &=& \sum_j\sum_{\sigma\in\mathrm{S}_p,\,\sigma_j=1} \pi^*\gamma_1\wedge\cdots\wedge\pi^*\gamma_j\wedge\cdots\pi^*\gamma_p \,=\, p!\,\pi^*\gamma.
\end{eqnarray*}
Finally if $i=0$, then the above becomes
\[ \sum_j\sum_{\sigma\in\mathrm{S}_p} \pi^*\gamma_1\wedge\cdots \wedge\widetilde{\gamma_j}\wedge\cdots\wedge\pi^*\gamma_p \,=\, p!\,\widetilde{\gamma}       \]
and the result follows.
\end{proof}
Finding new $p$-forms with a complete lift and the usual procedure is thus overruled.

The following concepts are considered by the author in \cite{Alb2019a}. We call a vector field $W$ over $TM$ a $\lambda$-\textit{mirror vector field} if
\[ \call_WB=\lambda B \]
for some function $\lambda$ on $TM$. Proposition \ref{propdeB} gives some examples of $\lambda$-mirror vector fields.

Let us refer here the theory of sprays, developed also by \cite{Grifone,Klein}.
A spray is a vector field $S$ on $TM$ such that $BS=\xi$. In other words, $\dx\pi_u(S)=u,\ \forall u\in TM$. Sprays become important objects capable of inducing connections and special Lie algebras of vector fields on the tangent manifold, cf. \cite{Anona1}.

\subsection{A mirror of the first cohomology}

Given any $p$-form $\mu$ on $M$, we may consider the $p$-forms ${\alpha}_\mu$ defined on $TM$ such that ${\alpha}_\mu\circ B^{\wedge,p}=\pi^*\mu$. The sum of two such forms for two base $\mu_1,\mu_2$ is a form of the same type. Indeed, any $\alpha_{\mu_1},\alpha_{\mu_2}$ and any $c\in\R$  give us a $p$-form $\alpha_{\mu_1}+c\alpha_{\mu_2}$ of the type $\alpha_{\mu_1+c\mu_2}$.

Those forms exist at least for $p=1$. We may take for instance $\widetilde{\mu}$; this is a type $\alpha_\mu$ form by Proposition \ref{nogoProp}.

The theory of connections allows us to define one $\alpha_\mu$ for any given $\mu$, in a canonical fashion, but we shall not take that path here.

Any forms \textit{above} the same $\mu$ differ by some form $\alpha_0$, this is, $\alpha_0\circ B^{\wedge,p}=0$, or in other words, by some $p$-form which does not have the component $\dx v^{j_1}\wedge\cdots\wedge\dx v^{j_p}$ corresponding to the component $\dx x^{j_1}\wedge\cdots\wedge\dx x^{j_p}$ in $\mu$. The pull-back forms are type $\alpha_0$.

Let us see an example. On $T\R^2$, the $\alpha_0$ 2-forms are written
$\alpha_0=a_{00}\dx x^1\wedge\dx x^2+\sum_{i,j=1}^2 a_{ij}\dx x^i\wedge\dx v^j$
where $a_{00},a_{ij}$ are functions of $(x^1,x^2,v^1,v^2)$. On the other hand, no 2-form $\mu$ on $\R^2$ exists such that  $v^1\dx v^1\wedge\dx v^2$ is a $\alpha_\mu$.

In general we have a proper real vector subspace $\calf^p\subset\Omega^p_{TM}$ of \textit{above} $p$-forms, for every $p\leq n$. There is no immediate coboundary operator though, for a complex sequence $\calf^*$; but for $p=1$ something is worth mentioning.

For a 1-form $\mu\in\Omega_M$, let us suppose a $\alpha_\mu$ exists as above. Locally $\mu=a_i\dx x^i$ on $\calu\subset M$ and hence $\alpha_\mu=h_i\dx x^i+a_i\dx v^i$, with $h_i$ functions on $\inv{\pi}(\calu)$. We have
\begin{equation}\label{closed_alpha_mu_oneforms}
 \dx\alpha_\mu=0 \qquad\Longleftrightarrow\qquad\begin{cases} \papa{h_i}{x^j}=\papa{h_j}{x^i}  \\  \papa{h_j}{v^i}=\papa{a_i}{x^j} \end{cases}\ .
\end{equation}
For any 1-form $\mu\in\Omega_M$, we may consider the functions on $TM$ of the form
\begin{equation} \label{affinefunctions}
  f_\mu(x,v)= a_i(x)v^i +c(x)
\end{equation}
ie. $f_\mu=\xi\lrcorner\pi^*\mu + c$ where $c$ is any function on $M$. It follows that $f_\mu$ is well defined on $TM$, and its differential
\[ \dx f_\mu = (\papa{a_j}{x^i}v^j+\papa{c}{x^i})\dx x^i+a_i\dx v^i\] 
induces an $\alpha_\mu$.
\begin{prop} \label{affinefunctions_equivalences}
For any  $f\in\cinf{TM}$ the following are equivalent:\,\ (i) $f=f_\mu$ for some 1-form $\mu\in\Omega_M$;\,\ (ii) $f$ induces an $\alpha_\mu=\dx f$ for some 1-form $\mu$;\,\ (iii) $f$ verifies $\papa{^2f}{v^i\partial v^j}=0$ (on one and hence any chart).
\end{prop}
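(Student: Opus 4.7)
The plan is to prove the implications in the cycle \mbox{(i)$\Rightarrow$(ii)$\Rightarrow$(iii)$\Rightarrow$(i)}, each by direct local calculation, relying on the formula $\alpha_\mu\circ B=\pi^*\mu$ and on the coordinate expression of the mirror map $B=\dx x^j\otimes\partial_{v^j}$.

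For (i)$\Rightarrow$(ii), given $\mu=a_i\dx x^i$, I would compute $\dx f_\mu$ as already written in the text above \eqref{affinefunctions}, obtaining $\dx f_\mu=(\partial_{x^i}a_j\,v^j+\partial_{x^i}c)\dx x^i+a_i\dx v^i$. Then using $\dx x^i\circ B=0$ and $\dx v^i\circ B=\dx x^i$ (both immediate from the definition of $B$), I would conclude $\dx f_\mu\circ B=a_i\dx x^i=\pi^*\mu$, so that $\dx f_\mu$ is itself an $\alpha_\mu$.

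For (ii)$\Rightarrow$(iii), suppose $\dx f=\alpha_\mu$ in some chart, with $\mu=a_i\dx x^i$ on $M$. Writing $\dx f=\partial_{x^i}f\,\dx x^i+\partial_{v^i}f\,\dx v^i$ and comparing with the required identity $\alpha_\mu\circ B=\pi^*\mu$, one reads off $\partial_{v^i}f=a_i(x)$, a pull-back from $M$. Differentiating in $v^j$ gives $\partial^2f/\partial v^i\partial v^j=0$, which is (iii). The chart-independence of this condition will follow from the bottom identity in \eqref{trantangent2}, which shows that $\partial_{v'^a}=(\partial x^i/\partial x'^a)\partial_{v^i}$ preserves vanishing second derivatives in the fibre directions.

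The step (iii)$\Rightarrow$(i) is the only one whose naturality needs care. From $\partial^2f/\partial v^i\partial v^j=0$, I would conclude that $a_i:=\partial_{v^i}f$ depends on $x$ alone, and then $c:=f-a_i v^i$ satisfies $\partial_{v^k}c=0$, so $c$ is a function on $M$. What remains—and is the only mildly non-routine point—is to check that $a_i\dx x^i$ is a genuine 1-form on $M$, not merely a chart-dependent expression. Using \eqref{trantangent2} once more,
\[ a'_a=\partial_{v'^a}f=\papa{x^i}{x'^a}\papa{f}{v^i}=\papa{x^i}{x'^a}a_i,\]
which is precisely the transformation law for the components of a 1-form. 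Hence $\mu:=a_i\dx x^i$ is globally defined on $M$ and $f=f_\mu$, closing the cycle.
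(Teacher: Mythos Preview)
Your proof is correct and follows essentially the same route as the paper's own argument: the cycle (i)$\Rightarrow$(ii)$\Rightarrow$(iii)$\Rightarrow$(i), each step done by direct local calculation using $\dx x^i\circ B=0$, $\dx v^i\circ B=\dx x^i$, and the transformation law in \eqref{trantangent2}. Your version is slightly more detailed (you spell out the chart-independence of (iii) and the verification that $c$ descends to $M$), but the ideas are identical.
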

\begin{proof}
 (i)$\Rightarrow$(ii) From \eqref{affinefunctions} we see immediately that $ \dx f_\mu\circ B=\pi^*\mu$.\ \,
(ii)$\Rightarrow$(iii) If $f$ satisfies $\dx f\circ B=\pi^*\mu$ then since $\dx f=\papa{f}{x^i}\dx x^i+\papa{f}{v^i}\dx v^i$ it follows that $\papa{f}{v^i}=a_i$ where $a_i$ are the components of $\mu$. Hence $\papa{^2f}{v^i\partial v^j}=0$.\ \,
(iii)$\Rightarrow$(i) Vanishing second derivative on vertical directions implies $f(x,v)=a_i(x)v^i+c(x)$ on each $T_xM$. Taking another chart $(x'^p,v'^q)_{p,q=1,\ldots,m}$, we have
\[  a'_p=\papa{f}{v'^p}=\papa{x^i}{x'^p}\papa{f}{v^i}=\papa{x^i}{x'^p}a_i \]
proving that $\mu=a_i\dx x^i$ is globally defined.
\end{proof}
Now the question is to determine the `closed $\calf^1$ forms mod \textit{any} exact 1-forms'.
 \begin{teo}
 We have
 \begin{equation}
  \frac{\{\alpha_\mu:\ \dx\alpha_\mu=0,\ \mu\in\Omega^1_M\}}{\{\alpha_\nu:\ \alpha_\nu=\dx f_\nu,\ \nu\in\Omega_M^1,\ c\in \cinf{M}\}}\simeq H^1(M;\R).
 \end{equation}
\end{teo}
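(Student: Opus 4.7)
The plan is to construct an explicit linear map
\[ \Phi : \{\alpha_\mu : \dx\alpha_\mu=0\} \lrr H^1(M;\R),\qquad \Phi(\alpha_\mu):=[s^*\alpha_\mu], \]
where $s:M\lrr TM$ is the zero section, and then to identify its kernel with the subspace of exact forms $\dx f_\nu$ appearing in the denominator.

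First I would unpack the local closedness equations \eqref{closed_alpha_mu_oneforms}. Writing $\alpha_\mu=h_i\dx x^i+a_i\dx v^i$ in a standard chart with $\mu=a_i\dx x^i$, the mixed condition $\papa{h_j}{v^i}=\papa{a_i}{x^j}$ forces $h_j$ to be affine in the fibre coordinates, so that
\[ h_j(x,v)=\papa{a_i}{x^j}v^i+h_{j,0}(x); \]
the remaining horizontal condition then reduces to $\papa{h_{i,0}}{x^j}=\papa{h_{j,0}}{x^i}$, so $h_{j,0}\dx x^j$ is closed on each patch. A short check -- setting $v=0$ and using $s^*\dx v^i=0$ -- identifies this local 1-form globally with $s^*\alpha_\mu\in\Omega^1_M$, which is therefore closed, and $\Phi$ is well defined.

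The heart of the argument is then the global identity
\[ \alpha_\mu \;=\; \pi^*(s^*\alpha_\mu) + \dx f_\mu,\qquad f_\mu:=a_iv^i, \]
which I would establish by combining the affine expression for $h_j$ above with the direct computation $\dx(a_iv^i)=\papa{a_i}{x^j}v^i\dx x^j+a_i\dx v^i$. The right-hand side is manifestly a sum of globally defined objects, since $f_\mu$ is an affine-along-fibre function in the sense of \eqref{affinefunctions} -- cf.\ Proposition \ref{affinefunctions_equivalences} -- and the pullback of a 1-form is always global.

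From this decomposition the conclusions fall out quickly. For surjectivity, any closed $\eta\in\Omega^1_M$ lifts to $\pi^*\eta$, which is a closed $\alpha_0$-form (the case $\mu=0$, since $\pi^*\eta\circ B=0$) satisfying $\Phi(\pi^*\eta)=[\eta]$. For the kernel: if $\Phi(\alpha_\mu)=0$ then $s^*\alpha_\mu=\dx c$ for some $c\in\cinf{M}$, and the decomposition gives
\[ \alpha_\mu = \dx\pi^*c+\dx f_\mu = \dx(a_iv^i+c) = \dx f_\nu, \]
with $\nu=\mu$, so $\alpha_\mu$ lies in the denominator; conversely $\alpha_\mu=\dx f_\nu$ immediately yields $s^*\alpha_\mu=\dx(s^*f_\nu)$ exact on $M$. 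The main (mild) obstacle throughout is promoting the local affine structure of $h_j$ into the stated global identity, which is precisely where Proposition \ref{affinefunctions_equivalences} does the work, guaranteeing that the candidate primitive $a_iv^i$ is genuinely a function on $TM$ rather than a chart-dependent expression.
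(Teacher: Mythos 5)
Your argument is correct, and the homomorphism you construct is in fact literally the same one the paper uses: the paper's $\Theta(\alpha_\mu)=\bigl(h_i-\papa{a_k}{x^i}v^k\bigr)\dx x^i$ is exactly $s^*\alpha_\mu$, since $h_i-\papa{a_k}{x^i}v^k=h_{i,0}=h_i(\,\cdot\,,0)$ and $s^*\dx v^i=0$. What differs, and genuinely improves the exposition, is the organization. The paper defines $\Theta$ by the local formula and must then check by hand that the coefficients $g_i$ do not depend on $v$, that they transform correctly under a change of chart, and that the resulting 1-form on $M$ is closed; well-definedness on the quotient, surjectivity and injectivity are then three further coordinate computations. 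You instead isolate the single global identity $\alpha_\mu=\pi^*(s^*\alpha_\mu)+\dx f_\mu$ with $f_\mu=a_iv^i$, i.e.\ $f_\mu(x,v)=\mu_x(v)$, whose global meaning is exactly what Proposition \ref{affinefunctions_equivalences} supplies. Once that identity is established, the globality and closedness of $s^*\alpha_\mu$ come for free (pullback of a global closed form by the global map $s$), surjectivity is the one-line observation $\Phi(\pi^*\eta)=[\eta]$ with $\pi^*\eta$ a closed $\alpha_0$-form, and both inclusions describing the kernel drop out of the decomposition. Two small points are left implicit and should be said aloud: the fibres of a standard chart are connected, which is what lets you integrate $\papa{h_j}{v^i}=\papa{a_i}{x^j}$ to the affine expression $h_j=\papa{a_i}{x^j}v^i+h_{j,0}(x)$; and the denominator of the quotient really sits inside the numerator, since each $\dx f_\nu$ is a closed $\alpha_\nu$ by the implication (i)$\Rightarrow$(ii) of Proposition \ref{affinefunctions_equivalences}. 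Both are immediate, so the proof stands.
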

\begin{proof}
Let us take a closed 1-form $\alpha_\mu=h_i\dx x^i+a_i\dx v^i$ where $\mu=a_i\dx x^i$. The isomorphism we are referring is induced from $\Theta$, which is described by
 \[   \Theta(\alpha_\mu)=\gamma \]
with $\gamma$ given by $\gamma=g_i\dx x^i$ where $g_i=h_i-\papa{a_k}{x^i}v^k$. Notice $\papa{g_i}{v^j}=0$ by \eqref{closed_alpha_mu_oneforms} so $\gamma$ descends to $M$. Moreover, $\gamma$ is globally defined, as the reader may care to prove that $g'_p=\papa{x^i}{x'^p}g_i$ transforms the way it should. Finally, the first half of system \eqref{closed_alpha_mu_oneforms} and together with Schwarz Theorem proves that $\gamma$ is closed.

 The linear map $\Theta$ is well defined on cohomology since, if $\alpha_\mu=\dx f$, then
$\alpha_\mu=\dx f_\nu$ by the Proposition. Hence $\pi^*\mu=\pi^*\nu$ implying $\mu=\nu$. Now letting $f_\mu=a_iv^i+c$ it follows that $\Theta$ sends $\dx f_\mu$ to $\dx c$.

The induced map is surjective since $\Theta(\pi^*\gamma)=\gamma$, for any $\gamma\in\Omega^1_M$. It is injective since if $\Theta(\alpha_\mu)=\gamma=\dx c$, then $h_i-\papa{a_k}{x^i}v^k=\papa{c}{x^i}$ and thus $\alpha_\mu=\dx f_\mu$ with obvious $f_\mu$.
\end{proof}


\subsection{The $\dx_B$ cohomology}

A more plain cohomology associated to the tangent manifold emerges through the theory of Fr\"olicher-Nijenhuis on the graded algebra $\Der_*(\Omega^*_\calm)$ of derivations of $\Omega^*_\calm$, where $\calm$ is any given manifold. Let us recall this extraordinary theory in brief terms, which brings a most complete insight into $\Der_*(\Omega^*_\calm)$. We follow the reference \cite{Michor} quite closely. First, a derivation $D:\Omega^*\lrr\Omega^{*+k}$ is a linear operator such that $D(\alpha\wedge\beta)=(D\alpha)\wedge\beta+(-1)^{kl}\alpha\wedge D\beta$, for every $\alpha\in\Omega^l$ and every $\beta$ differential form. The graded Lie bracket on $\Der_*(\Omega^*_\calm)$ is given by $[D_1,D_2]:=D_1\circ D_2-(-1)^{k_1k_2}D_2\circ D_1$. It satisfies the identities
\[  [D_1,D_2]=-(-1)^{k_1k_2}[D_2,D_1] \]
\[  [D_1,[D_2,D_3]]=[[D_1,D_2],D_3]+(-1)^{k_1k_2}[D_2,[D_1,D_3]]  .  \]
In other words, $[D_1,\ ]$ is itself a derivation of degree $k_1$.

Fundamental derivations are given by $X\lrcorner(\,\cdot\,)=\ip_X(\,\cdot\,)$, for $X\in\XIS_\calm$, of degree $-1$; or given by exterior derivative $\dx$, of degree $+1$, or by $\call_X=[\ip_X,\dx]=\ip_X\dx+\dx\ip_X$ of degree 0.

The algebraic derivations $\ip_X$ can be generalized. Firstly, they are algebraic because they vanish on functions, and therefore are tensorial. Now if $K\in\wedge^{k+1}T^*\calm\otimes T\calm$ or say if $K\in\Omega^{k+1}_\calm(T\calm)$, if $\omega\in\Omega^p_\calm$ and if $X_1,\ldots,X_{k+p}\in\XIS_\calm$, then
\begin{equation}
\ip_K\omega(X_1,\ldots,X_{k+p})=\frac{1}{(k+1)!(p-1)!}\sum_{\sigma\in \mathrm{S}_{k+p}}\sg{\sigma}\,\omega(K(X_{\sigma_1},\ldots,X_{\sigma_{k+1}}),X_{\sigma_{k+2}},\ldots,X_{\sigma_{k+p}})
\end{equation}
defines an algebraic derivation of degree $k$. This important theorem states moreover that any algebraic derivation is of that type: it arises from a vector-valued $(k+1)$-form $K$.

In relation to the previous section we find a simple coincidence. For any $\omega\in\Omega^p_\calm$ and any $K\in\Omega^{1}_\calm(T\calm)$
\begin{equation}
 \ip_K\omega=\frac{1}{(p-1)!}\omega\circ(K\wedge1^{\wedge,(p-1)}).
\end{equation}
In particular, $\ip_{1_{T\calm}}\omega=p\omega$.

Next we recall the Lie derivations, defined by $\call_K=[\ip_K,\dx]=\ip_K\circ\dx-(-1)^{k-1}\dx\circ\ip_K$ for any $K\in\Omega^k_\calm(T\calm)$ --- it is a degree $k$ derivation. It follows easily that $[\call_K,\dx]=0$. And that $K\mapsto \call_K$ is injective.

Another striking theorem shows that every derivation is the sum of an algebraic and a Lie derivation: for each $D$ there exist unique $K$ of degree $k$ and $L$ of degree $(k+1)$ such that
\begin{equation}
D=\call_K+\ip_L .
\end{equation}
Due to the above, $D$ is Lie if and only if $L=0$ if and only if $[D,\dx]=0$. And $D$ is algebraic if and only if $K=0$.

For $K,L\in\Omega^*_\calm(T\calm)$ of degrees $k,l$ respectively, clearly $[[\call_K,\call_L],\dx]=0$, so we have a Lie derivation and therefore there exists unique $(k+l)$-form $[K,L]^{\mathrm{FN}}$ such that $[\call_K,\call_L]=\call_{[K,L]^{\mathrm{FN}}}$. Such bracket is the Fr\"olicher-Nijenhuis bracket of $\Omega^*(T\calm)$ generating a Lie graded subalgebra inside $\Der(\Omega^*)$ via the monomorphism $K\mapsto\call_K$.

For $X,Y\in\XIS_\calm=\Omega^0(T\calm)$, the Fr\"olicher-Nijenhuis
bracket is the usual Lie bracket. With $1\in\Omega^1_\calm(T\calm)$, we find $\call_1=[\ip_1,\dx]=(p+1)\dx-p\dx=\dx$. Also $\call_{[1,K]^{\mathrm{FN}}}=[\dx,\call_K]=0$, and thus $[1,K]^{\mathrm{FN}}=0$.

Needless to say, there are elaborate developments on the computation of the Fr\"olicher-Nijenhuis bracket which are very useful, cf. \cite{Michor}.

For $k$ odd we have
\[ \call_K^2=\frac{1}{2}(\call_K^2-(-1)^{k^2}\call_K^2)=\frac{1}{2}[\call_K,\call_K]=\frac{1}{2}\call_{[K,K]^{\mathrm{FN}}} \]
and so we obtain a coboundary operator if and only if $[K,K]^{\mathrm{FN}}=0$.

We stop here the presentation of the beautiful theory of derivations, in order to focus on our main subject which, we recall, is given by a manifold $M$ and its mirror map $B\in\Omega^1_{TM}(TTM)$.

It has been understood long ago (we use the established notation) that
\[ \dx_B:=\call_B=[\ip_B,\dx]      \]
defines a coboundary operator on $\Omega^*_{TM}$, this is, a derivation on the algebra of differential forms on the tangent manifold with vanishing square. This was most probably first discovered in \cite{Anona1}. The elaborate ways to compute the Fr\"olicher-Nijenhuis give us
\[ \frac{1}{2}[K,K]^{\mathrm{FN}}(X,Y)=[KX,KY]-K[X,KY]-K[X,KY]+K^2[X,Y]  \]
and so it is very easy to see $[B,B]^{\mathrm{FN}}=0$. We also have a straightforward proof.
\begin{prop}
 $\dx_B\dx_B=0$.
\end{prop}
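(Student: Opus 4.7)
The plan is to reduce $\dx_B\dx_B=0$ to the vanishing of the Frölicher--Nijenhuis self--bracket $[B,B]^{\mathrm{FN}}$, then dispatch the latter using the explicit formula recalled just above the proposition. Since $B$ has odd degree $1$, the identity
\[
 \call_K^2=\tfrac12\,\call_{[K,K]^{\mathrm{FN}}}
\]
applies to $K=B$. As $\dx_B=\call_B$ and $K\mapsto\call_K$ is injective, the claim reduces to $[B,B]^{\mathrm{FN}}=0$.

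For this, I would feed $B$ into the formula
\[
 \tfrac12\,[B,B]^{\mathrm{FN}}(X,Y)=[BX,BY]-B[BX,Y]-B[X,BY]+B^2[X,Y],
\]
for arbitrary $X,Y\in\XIS_{TM}$. Two summands are killed immediately: $B^2=0$ from the identity $B\circ B=0$ already recorded; and $[BX,BY]$ lies in the integrable vertical distribution $V=\ker B$, since $BX,BY$ are $\pi$-vertical sections. Thus what needs to be checked is the purely vertical identity
\[
 [BX,BY]=B[BX,Y]+B[X,BY].
\]

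The remaining verification is a direct coordinate check in a standard chart $(x^i,v^j)$ with $B=\dx x^j\otimes\partial_{v^j}$. Writing $X=X^i\partial_{x^i}+\widehat X^i\partial_{v^i}$ and similarly for $Y$, one has $BX=X^i\partial_{v^i}$ and $BY=Y^j\partial_{v^j}$. Expanding the commutators and remembering that $B$ extracts only the $\partial_{x^\cdot}$ coefficient and re-injects it vertically, both sides collapse to
\[
 X^i\papa{Y^j}{v^i}\partial_{v^j}-Y^j\papa{X^i}{v^j}\partial_{v^i}.
\]
On the right, the contributions of $\widehat X^i,\widehat Y^j$ and of $\partial_{x^\cdot}X^i,\partial_{x^\cdot}Y^j$ from $[BX,Y]$ and $[X,BY]$ sit either in the vertical or in the horizontal slot that the outer $B$ subsequently annihilates, so they disappear.

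The main obstacle is not conceptual but bookkeeping: one must track exactly which components of the inner brackets are killed by the outer $B$. A more conceptual route would exploit the factorisation $B=\pi^\estrela\circ\pi_*$ together with the vanishing bracket $[\pi^\estrela X,\pi^\estrela Y]=0$ from \eqref{parentesisdeLieentrecanonicos}, but this requires upgrading that identity from lifts of vector fields on $M$ to arbitrary vector fields on $TM$, which is not automatic; hence I expect the direct coordinate check above to be the shortest honest argument.
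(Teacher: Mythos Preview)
Your argument is correct, but it follows a different route from the paper's own proof. The paper does mention the Fr\"olicher--Nijenhuis approach just before the proposition, observing that $[B,B]^{\mathrm{FN}}=0$ is ``very easy to see'' from the explicit formula; however, the proof it actually writes out is different: it records that $\dx_B\dx_B=\frac{1}{2}[\dx_B,\dx_B]$ is a degree-$2$ derivation of $\Omega^*_{TM}$ and simply checks that it annihilates the algebra generators $f\in\Omega^0$, $\dx x^i$, $\dx v^j$, using $\dx_Bf=\partial_{v^i}f\,\dx x^i$ and $\dx_B\dx x^i=\dx_B\dx v^i=0$. Your approach instead passes through the FN bracket and verifies $[B,B]^{\mathrm{FN}}=0$ by a coordinate computation on vector fields; this is equally valid and makes the integrability of $V$ (and the nilpotency $B^2=0$) more visibly responsible for the result, whereas the paper's proof is shorter and stays entirely on the side of forms without invoking the injectivity of $K\mapsto\call_K$.

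One minor slip in your write-up: you announce that ``two summands are killed immediately'', but in fact only $B^2[X,Y]$ vanishes outright; the term $[BX,BY]$ is merely vertical and survives into the identity $[BX,BY]=B[BX,Y]+B[X,BY]$, which you then correctly verify in coordinates.
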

\begin{proof}
 Let $\dx_B\dx_B=(\ip_B\dx-\dx\ip_B)^2=\ip_B\dx\ip_B\dx-\dx\ip_B\ip_B\dx+\dx\ip_B\dx\ip_B$. A simple analyis on 1-forms $\dx x^i,\dx v^j$ and any $f\in\Omega^0$ shows their image under the derivation $\dx_B\dx_B=\frac{1}{2}[\dx_B,\dx_B]$ vanishes. Since they generate the graded algebra, the result follows.
\end{proof}

Indeed it is worth noticing that, for any chart $(x^i,v^j)$ and any function $f$ on $TM$,
\begin{equation}
    \dx_Bf=\papa{f}{v^i}\dx x^i \qquad\ \  \dx_B\dx x^i=0\qquad\ \ \dx_B\dx v^i=0 .
\end{equation}
In particular, $\dx_Bx^i=0$ and $\dx_Bv^i=\dx x^i$. We observe moreover that $[\dx,\dx_B]=0$ or
\[  \dx\dx_B=-\dx_B\dx . \]
\begin{prop}
For every functions $\epsilon_1,\epsilon_2$ on $TM$ we have a coboundary operator ($D\circ D=0$)
 \begin{equation}
  D=\epsilon_1\dx+\epsilon_2\dx_B=\call_{\epsilon_1 1+\epsilon_2 B}:\Omega^*_{TM}\lrr\Omega^{*+1}_{TM}
\end{equation}
if and only if $\epsilon_1,\epsilon_2\in\R$.
\end{prop}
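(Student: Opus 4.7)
My approach would be a direct computation of $D\circ D$, together with a test on coordinate functions. The sufficiency is transparent: when $\epsilon_1,\epsilon_2$ are real constants they commute with the derivations $\dx$ and $\dx_B$, so
\[ D\circ D \,=\, \epsilon_1^2\,\dx^2 + \epsilon_1\epsilon_2(\dx\dx_B+\dx_B\dx) + \epsilon_2^2\,\dx_B^2 \,=\, 0 \]
by $\dx^2=0$, the just-proved $\dx_B\dx_B=0$, and the anticommutation $\dx\dx_B=-\dx_B\dx$ recorded immediately above.

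For the converse, I would first derive the clean formula
\[ D^2\alpha \,=\, D\epsilon_1 \wedge \dx\alpha + D\epsilon_2 \wedge \dx_B\alpha \qquad \forall \alpha\in\Omega^p_{TM} \]
(valid in any degree) by expanding $D(\epsilon_k\dx\alpha)$ and $D(\epsilon_k\dx_B\alpha)$ via the Leibniz rule for the derivations $\dx$ and $\dx_B$ and then collecting terms using the three identities $\dx^2=0$, $\dx_B^2=0$, $\dx\dx_B+\dx_B\dx=0$. The second-order cross terms $\epsilon_1\epsilon_2(\dx\dx_B+\dx_B\dx)\alpha$ cancel, leaving only the two wedge products above.

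Next I would test this formula on the coordinate functions in a standard chart $(x^i,v^j)$. Since $\dx_B x^i=0$, we get $D^2 x^i = D\epsilon_1\wedge\dx x^i$; demanding this vanish for every $i$ and reading off components in the coframe $\{\dx x^k,\dx v^k\}$ forces $D\epsilon_1\equiv 0$. Since $\dx_B v^j=\dx x^j$, we then get $D^2 v^j = D\epsilon_2\wedge\dx x^j$, forcing $D\epsilon_2\equiv 0$. Expanding these two vanishing conditions locally,
\[ D\epsilon_k \,=\, \epsilon_1\,\partial_{v^i}\epsilon_k\,\dx v^i + \bigl(\epsilon_1\,\partial_{x^i}\epsilon_k + \epsilon_2\,\partial_{v^i}\epsilon_k\bigr)\dx x^i \,=\, 0, \]
the $\dx v^i$-coefficients yield $\epsilon_1\,\partial_{v^i}\epsilon_k=0$, and the $\dx x^i$-coefficients yield $\epsilon_1\,\partial_{x^i}\epsilon_k + \epsilon_2\,\partial_{v^i}\epsilon_k=0$, for $k=1,2$.

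On the open region where $\epsilon_1$ is non-vanishing everything is then immediate: the first set of relations gives $\partial_{v^i}\epsilon_k=0$ and substituting into the second gives $\partial_{x^i}\epsilon_k=0$, so $\epsilon_1,\epsilon_2$ are locally constant there. The main obstacle I expect is the locus $\{\epsilon_1=0\}$, where the above equations degenerate; to exclude a non-constant $\epsilon_2$ supported on such a set one has to patch across the vanishing set, possibly by further testing on forms of degree $\geq 2$ or by invoking the Fr\"olicher--Nijenhuis condition $[\epsilon_1\cdot 1+\epsilon_2 B,\,\epsilon_1\cdot 1+\epsilon_2 B]^{\mathrm{FN}}=0$ globally, after which connectedness of $TM$ upgrades local to global constancy.
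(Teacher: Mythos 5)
Your sufficiency argument, the identity $D^2\alpha=D\epsilon_1\wedge\dx\alpha+D\epsilon_2\wedge\dx_B\alpha$, and the test on coordinate functions are all correct, and this is exactly the computation the paper's one-line proof prescribes. But the obstacle you flag at the locus $\{\epsilon_1=0\}$ is not a technicality to be patched: it is a genuine failure of the ``only if'' direction as literally stated. Take $\epsilon_1\equiv 0$ and $\epsilon_2=\pi^*g$ for a non-constant $g\in\cinf{M}$. Then $D=(\pi^*g)\,\dx_B$ and $D^2\alpha=D\epsilon_2\wedge\dx_B\alpha=\pi^*g\;\dx_B(\pi^*g)\wedge\dx_B\alpha=0$, since $\dx_B(\pi^*g)=\papa{(\pi^*g)}{v^i}\dx x^i=0$; so $D$ is a coboundary operator with non-constant $\epsilon_2$. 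Indeed, for $m\geq2$ your own equations show $D^2=0$ is equivalent to $D\epsilon_1=D\epsilon_2=0$, whose full solution set on a connected $TM$ is: either $\epsilon_1$ a non-zero constant and $\epsilon_2$ constant, or $\epsilon_1\equiv0$ and $\epsilon_2$ constant along each fibre. (Your connectedness argument does correctly exclude a mixed situation: on $\{\epsilon_1\neq0\}$ the function $\epsilon_1$ is locally constant and non-zero, so that open set is also closed.)

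Neither of your two proposed repairs can close this. First, $D^2$ is a degree-$2$ derivation of $\Omega^*_{TM}$ and is therefore determined by its values on $\Omega^0$ and on the local generators $\dx x^i,\dx v^j$ of $\Omega^1$; your formula gives $D^2\dx x^i=D^2\dx v^j=0$ automatically (both $\dx$ and $\dx_B$ annihilate $\dx x^i$ and $\dx v^j$), so testing on forms of degree $\geq1$ yields no new conditions. Second, the Fr\"olicher--Nijenhuis route concerns a different operator: since $\call_{fK}=f\call_K-\dx f\wedge\ip_K$ for $K\in\Omega^1(T\calm)$, one has $\call_{\epsilon_1 1+\epsilon_2 B}=\epsilon_1\dx+\epsilon_2\dx_B-\dx\epsilon_1\wedge\ip_1-\dx\epsilon_2\wedge\ip_B$, which differs from $D$ precisely when the coefficients are non-constant; so $[\epsilon_1 1+\epsilon_2 B,\epsilon_1 1+\epsilon_2 B]^{\mathrm{FN}}=0$ is not equivalent to $D^2=0$. (In fact the displayed equality $D=\call_{\epsilon_1 1+\epsilon_2 B}$ in the statement already forces $\dx\epsilon_1=\dx\epsilon_2=0$ when $m\geq2$, which is presumably the reading that makes the proposition true; under the literal reading ``$D\circ D=0$'' one must add the hypothesis that $\epsilon_1$ is not identically zero, and the case $m=1$ needs separate care, since $\eta\wedge\dx x^1=0$ does not kill the $\dx x^1$-component of a $1$-form $\eta$.)
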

The proof of the proposition is trivial evaluating $D^2$ on the coordinate functions.

When $\epsilon_2=0$ we have essentially de\,Rham cohomology. So we proceed with the study of the derivation $D=c\dx+\dx_B$, where $c$ is a constant. The theory gives rise to a cohomology ring $H^*_D(TM)=\oplus_{q=0}^{2m}H^q_D(TM)$ where
\[  H^q_D(TM)= \frac{\{\omega\in\Omega^q_{TM}: \ D\omega=0\}}{\{D\tau:\ \,\tau\in\Omega^{q-1}_{TM}\}} ,  \]
and also a new Bott-Chern type cohomology, for $q\geq2$,
\[  H^q_{\dx\dx_B}(TM)=\frac{\{\omega\in\Omega^q_{TM}: \ \dx\omega=\dx_B\omega=0\}}{\{\dx\dx_B\tau:\ \,\tau\in\Omega^{q-2}_{TM}\}} . \]
Of course the true geometrical significance of these objects is still to be found.

Notice $\dx_B$ only involves $\papa{}{v^i}$ derivatives, so for every $\omega\in\Omega^p_M$ we have
\begin{equation}\label{pullbacksaredBclosed}
   \dx_B\pi^*\omega=0 .
 \end{equation}
 \begin{prop}
 For any $\omega\in\Omega^p_M$, we have:
 \begin{enumerate}[label=(\roman*)]
  \item $\pi^*\omega$ vanishes in $\dx_B$-cohomology: $\dx_B(\xi\lrcorner\widetilde{\omega})= \pi^*\omega$
  \item $\dx_B\widetilde{\omega}=\dx\pi^*\omega$.

  \item If $\omega$ is closed, then $\dx(\xi\lrcorner\widetilde{\omega})=\widetilde\omega$.
 \end{enumerate}
 \end{prop}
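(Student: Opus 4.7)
The plan is to dispatch the three items in the order (iii), (ii), (i), since (iii) is essentially immediate from Theorem \ref{teo_Lieisidentitty} and (ii) is the computational heart that underpins (i). For (iii), the hypothesis $\dx\omega=0$ gives $\dx\widetilde{\omega}=\widetilde{\dx\omega}=0$, so Cartan's formula collapses to $\call_\xi\widetilde{\omega}=\dx(\xi\lrcorner\widetilde{\omega})$; Theorem \ref{teo_Lieisidentitty} identifies the left-hand side with $\widetilde{\omega}$, yielding the claim at once.

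For (ii) I would offer two equally brief routes. The coordinate-free route uses $\dx_B=\ip_B\dx-\dx\ip_B$ in combination with Proposition \ref{nogoProp}: setting $i=1$ there identifies $\ip_B\widetilde{\gamma}$ with $p\,\pi^*\gamma$ for every $p$-form $\gamma$ on $M$, and then the identity $\dx\widetilde{\omega}=\widetilde{\dx\omega}$ reduces $\dx_B\widetilde{\omega}$ to $(p+1)\pi^*\dx\omega-p\,\dx\pi^*\omega=\dx\pi^*\omega$. The local route is equally short: in a chart, $\widetilde{\omega}$ is the sum of a ``principal'' piece $v^j\papa{o_I}{x^j}\dx x^I$ and several ``vertical'' pieces $o_I\,\dx x^{i_1}\wedge\cdots\wedge\dx v^{i_k}\wedge\cdots\wedge\dx x^{i_p}$. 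Since the latter coefficients are pulled back from $M$ and are killed by $\dx_B$, and since $\dx_B$ annihilates every $\dx x^i$ and every $\dx v^j$, only the principal piece contributes, its coefficient giving $\papa{o_I}{x^r}\dx x^r$ and hence reconstructing $\pi^*\dx\omega=\dx\pi^*\omega$ after wedging.

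For (i) I would run an analogous local calculation on $\xi\lrcorner\widetilde{\omega}$. Contracting with $\xi\in\ker\dx\pi$ annihilates the principal piece and the $\dx x^i$ legs of the vertical pieces, producing a $(p-1)$-form whose coefficients are linear in the fiber coordinates $v^{i_k}$ multiplied by $o_I$. Applying $\dx_B$ then replaces each $v^{i_k}$ by $\dx x^{i_k}$ and leaves the surviving $\dx x^i$ legs alone. The only subtle point is sign bookkeeping: the contraction $\xi\lrcorner$ introduces a factor $(-1)^{k-1}$ for the $k$-th vertical slot, while restoring the newly created $\dx x^{i_k}$ to its original $k$-th position contributes another $(-1)^{k-1}$; these two signs cancel and each of the summands over $k$ reproduces $o_I\,\dx x^I$, exhibiting $\pi^*\omega$ as $\dx_B$-exact. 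The sign-accounting step in (i) is the main, and only, place where care is needed; the Fr\"olicher--Nijenhuis framework has already done all of the heavy lifting by reducing $\dx_B$ to a derivation entirely determined by its action on coordinate functions and coordinate differentials.
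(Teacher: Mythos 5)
Your handling of (iii) is exactly the paper's (Cartan's formula plus Theorem \ref{teo_Lieisidentitty}), and both of your routes to (ii) are sound. The coordinate-free one is genuinely different from the paper's chart computation and arguably cleaner: from $\ip_B\widetilde{\gamma}=p\,\pi^*\gamma$ (Proposition \ref{nogoProp} with $i=1$, via $\ip_B\gamma=\frac{1}{(p-1)!}\gamma\circ(B\wedge1^{\wedge,p-1})$) and $\dx\widetilde{\omega}=\widetilde{\dx\omega}$ one gets $\dx_B\widetilde{\omega}=(p+1)\pi^*\dx\omega-p\,\dx\pi^*\omega=\dx\pi^*\omega$; it trades the explicit local expression of $\widetilde{\omega}$ for two identities already established, and it generalizes more readily.

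The gap is in (i), and it sits precisely in the ``sign bookkeeping'' step — or rather in the summation you stop short of performing. You correctly find that the $k$-th vertical slot contributes $(-1)^{k-1}\cdot(-1)^{k-1}\,o_I\,\dx x^{i_1}\wedge\cdots\wedge\dx x^{i_p}$, i.e.\ one full copy of $o_I\,\dx x^I$; but there are $p$ such slots, so the sum is $p\,\pi^*\omega$, not $\pi^*\omega$. Concretely, for $\omega=\dx x^1\wedge\dx x^2$ one has $\xi\lrcorner\widetilde{\omega}=v^1\dx x^2-v^2\dx x^1$ and $\dx_B(\xi\lrcorner\widetilde{\omega})=\dx x^1\wedge\dx x^2-\dx x^2\wedge\dx x^1=2\,\dx x^1\wedge\dx x^2$. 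The same factor appears coordinate-freely: since $[\ip_\xi,\ip_B]=0$ (because $B\xi=0$) and $\call_\xi B=-B$, the graded Jacobi identity gives $[\ip_\xi,\dx_B]=\ip_B$, and applying this to $\widetilde{\omega}$ with $\ip_\xi\dx_B\widetilde{\omega}=\ip_\xi\pi^*\dx\omega=0$ yields $\dx_B(\xi\lrcorner\widetilde{\omega})=\ip_B\widetilde{\omega}=p\,\pi^*\omega$. So the displayed identity in (i) holds only for $p=1$; the correct general statement is $\dx_B\bigl(\frac{1}{p}\,\xi\lrcorner\widetilde{\omega}\bigr)=\pi^*\omega$, which still gives the cohomological vanishing for $p\geq1$. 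Your write-up asserts the unnormalized identity while your own computation produces the normalized one; you should either flag the discrepancy or carry the sum over $k$ to its conclusion. (The paper's proof elides the same summation, so the defect is in the statement as printed, not in your method.)
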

\begin{proof}
 Letting $\omega=o_{i_1\cdots i_p}\dx x^{i_1}\wedge\cdots\wedge\dx x^{i_p}$ with $i_1<i_2<\cdots<i_p$, we recall
 \begin{equation*}
  \begin{split}
    \widetilde{\omega} =v^j\papa{o_{i_1\cdots i_p}}{x^j}\dx x^{i_1}\wedge\cdots\wedge\dx x^{i_p}+ \hspace{53mm} \\
    o_{i_1\cdots i_p}(\dx v^{i_1}\wedge\dx x^{i_2}\wedge\cdots\wedge\dx x^{i_p}+\dx x^{i_1}\wedge\dx v^{i_2}\wedge\cdots\wedge\dx x^{i_p}+\cdots+\dx x^{i_1}\wedge\cdots\wedge\dx x^{i_{p-1}}\wedge\dx v^{i_p})
  \end{split}
 \end{equation*}
 We get immediatley $\dx_B\widetilde{\omega}= \pi^*\dx\omega=\dx\pi^*\omega$. Moreover, we see
 \[ \xi\lrcorner\widetilde\omega = o_{i_1\cdots i_p}(v^{i_1}\dx x^{i_2}\wedge\cdots\wedge\dx x^{i_p}-v^{i_2}\dx x^{i_1}\wedge\cdots\wedge\dx x^{i_p}+\cdots+(-1)^{p+1}v^{i_p}\dx x^{i_1}\wedge\cdots\wedge\dx x^{i_{p-1}}) \]
 and the first part follows too: $\dx_B(\xi\lrcorner\widetilde{\omega})= \pi^*\omega$. Part (iii) was proved in Theorem \ref{teo_Lieisidentitty}.
\end{proof}
Thus $\Omega^p_M\lrr H_{\dx_B}^p(TM)$ through pullback is just the 0 map.

Now the following map is well-defined
\begin{equation}
 H^*(M)\lrr H^*_D(TM) \,,\qquad[\omega]\longmapsto [\pi^*\omega].
\end{equation}
Indeed, if $\omega=\dx\tau$ over $M$, then $D(\frac{1}{c}\pi^*\tau)=\dx\pi^*\tau=\pi^*\omega$ in case $c\neq0$. Further, it is easy to see that $H^0_D(TM)=\{f\in\Omega^0:\ Df=0\}$ is the set of functions `constant along the fibre' if $c=0$ and globally constant if $c\neq0$. Thus $H^0(M)\hookrightarrow H^0_D(TM)$ is injective. Also, the map $H^1(M)\hookrightarrow H^1_D(TM)$ is injective for any $c$, as follows from a simple computation.

Now let us see the complete lifts.
\begin{prop}
 For $c\neq0$, the following map is well-defined
\begin{equation}
 H^*(M)\lrr H^*_D(TM) \,,\qquad[\omega]\longmapsto [\widetilde\omega].
\end{equation}
Moreover, for $p\geq1$, we have $[\widetilde\omega]=0\Leftrightarrow[\pi^*\omega]=0$.
 \end{prop}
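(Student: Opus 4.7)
The plan is to derive everything from one master identity: for any closed $\omega\in\Omega^p_M$ with $p\geq1$,
\[ D(\xi\lrcorner\widetilde\omega)=c\,\widetilde\omega+\pi^*\omega . \]
Once this is in hand, it immediately encodes the linear relation $c\,[\widetilde\omega]+[\pi^*\omega]=0$ in $H^*_D(TM)$, which packages both the well-definedness on exact classes and the claimed equivalence.

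First I would verify well-definedness at the cocycle level: if $\dx\omega=0$, splitting $D=c\dx+\dx_B$ and using $\dx\widetilde\omega=\widetilde{\dx\omega}=0$ together with $\dx_B\widetilde\omega=\pi^*\dx\omega=0$ (from the preceding proposition) gives $D\widetilde\omega=0$. To obtain the master identity I expand
\[ D(\xi\lrcorner\widetilde\omega)=c\,\dx(\xi\lrcorner\widetilde\omega)+\dx_B(\xi\lrcorner\widetilde\omega) \]
and apply Theorem~\ref{teo_Lieisidentitty}, which yields $\dx(\xi\lrcorner\widetilde\omega)=\widetilde\omega$ for closed $\omega$, together with the preceding proposition (part (i)), which gives $\dx_B(\xi\lrcorner\widetilde\omega)=\pi^*\omega$ for any $\omega$.

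With the master identity established, well-definedness on cohomology classes follows: if $\omega=\dx\tau$, then $\omega$ is closed, and $\pi^*\omega=\dx\pi^*\tau$; since $\dx_B\pi^*\tau=0$ by \eqref{pullbacksaredBclosed}, we have $\pi^*\omega=\frac{1}{c}D(\pi^*\tau)$, so the master identity rewrites as $\widetilde\omega=D\bigl(\frac{1}{c}\xi\lrcorner\widetilde\omega-\frac{1}{c^2}\pi^*\tau\bigr)$, confirming $[\widetilde\omega]=0$ in $H^*_D(TM)$. For the equivalence, the relation $c\,[\widetilde\omega]=-[\pi^*\omega]$ in $H^*_D(TM)$ is valid whenever $\omega$ is closed, and since $c\neq0$ it delivers $[\widetilde\omega]=0\Leftrightarrow[\pi^*\omega]=0$ at once.

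There is no serious obstacle, as the proof reduces entirely to combining Theorem~\ref{teo_Lieisidentitty} with the preceding proposition through the operator $D$. The only subtlety, and the reason the "moreover" is restricted to $p\geq1$, is that for $p=0$ the expression $\xi\lrcorner\widetilde f$ vanishes while $c\widetilde f+\pi^*f$ generally does not, so the master identity (and hence the linear relation between the two classes) only has content in positive degree.
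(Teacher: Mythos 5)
Your proposal is correct and follows essentially the same route as the paper: the identity $D(\xi\lrcorner\widetilde\omega)=c\,\widetilde\omega+\pi^*\omega$ (from Theorem~\ref{teo_Lieisidentitty} and $\dx_B(\xi\lrcorner\widetilde\omega)=\pi^*\omega$) is exactly what the paper computes to get $\pi^*\omega=D\mu$, and your treatment of exact $\omega$ only differs cosmetically in using the primitive $\frac{1}{c}\xi\lrcorner\widetilde\omega-\frac{1}{c^2}\pi^*\tau$ where the paper uses $\frac{1}{c}\widetilde\tau-\frac{1}{c^2}\pi^*\tau$, which agree up to a $\dx$-closed form.
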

 \begin{proof}
  We have $D\widetilde\omega=(c\dx+\dx_B)\widetilde\omega=c\widetilde{\dx\omega}
+\pi^*\dx\omega$. If $\omega$ is closed, then also $D\widetilde\omega=0$. Reciprocally, if $D\widetilde\omega=0$ then we deduce $\dx\omega=0$ . Now, if $\omega=\dx\alpha$, then
  \[  D(\frac{1}{c}\widetilde\alpha-\frac{1}{c^2}\pi^*\alpha)=\dx\widetilde\alpha+\frac{1}{c}\pi^*\dx\alpha-\frac{1}{c}\dx\pi^*\alpha=\widetilde\omega   \]
 so the map is well-defined. The equation $D\widetilde\omega=0$ implies $\dx\omega=0$ (for any $c$).
 Then $\widetilde\omega=D\gamma$ for some $\gamma\in\Omega_{TM}^{p-1}$ iff $\widetilde\omega= D(-\frac{1}{c}\mu+\frac{1}{c}\xi\lrcorner\widetilde\omega)$ for some $\mu\in\Omega_{TM}^{p-1}$. Since $\dx\omega=0$, we have
 \begin{eqnarray*}
   c\widetilde\omega&=& D(-\mu+\xi\lrcorner\widetilde\omega) \\
   &=&-D\mu+c\dx\xi\lrcorner\widetilde\omega+\dx_B\xi\lrcorner\widetilde\omega \\
   &=&-D\mu+c\widetilde\omega+\pi^*\omega
 \end{eqnarray*}
 or $\pi^*\omega=D\mu$.
 \end{proof}

A constant $f$ yields $\widetilde f=0$ or the vanishing of the previous map in degree 0.

The pullback and the complete lift do not have the same range inside $D$-cohomology. In the above, $c$ has pivotal role, as we see through a simple example. Let $\omega=\dx x^1$. Clearly $\dx_B\pi^*\omega=\dx_B\dx x^1=0$ while $\dx x^1=\dx_Bv^1$. On the other hand, $\widetilde\omega=\dx v^1$ and so $\widetilde\omega=\dx_B g=\partial_{v^i}g\,\dx x^i$ has no solution $g$.

We have another simple result worth noticing.
\begin{teo}
 For $q\geq2$, the following map is well-defined
 \begin{equation}
 H^q(M)\lrr H^q_{\dx\dx_B}(TM) \,,\qquad[\omega]\longmapsto [\pi^*\omega].
\end{equation}
\end{teo}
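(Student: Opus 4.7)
The plan is to verify the two requirements for well-definedness of the map into the Bott-Chern-type group
\[ H^q_{\dx\dx_B}(TM) = \frac{\{\alpha\in\Omega^q_{TM}:\ \dx\alpha = \dx_B\alpha = 0\}}{\{\dx\dx_B\tau:\ \tau\in\Omega^{q-2}_{TM}\}}. \]
First I would check that $\pi^*\omega$ is a cocycle whenever $\omega\in\Omega^q_M$ is closed: the identity $\dx\pi^* = \pi^*\dx$ gives $\dx\pi^*\omega = 0$ at once, and equation~\eqref{pullbacksaredBclosed} (which holds because $\dx_B$ only differentiates in the vertical $\partial/\partial v^i$ directions) yields $\dx_B\pi^*\omega = 0$. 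Thus $\pi^*\omega$ defines a class in the target group.

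Second, I need to show that if $\omega = \dx\eta$ with $\eta\in\Omega^{q-1}_M$, then $\pi^*\omega$ is $\dx\dx_B$-exact. The key move is to set $\sigma := \xi\lrcorner\widetilde\eta\in\Omega^{q-2}_{TM}$; the hypothesis $q\geq 2$ is used here precisely to ensure this degree is non-negative. The immediately preceding proposition established $\dx_B(\xi\lrcorner\widetilde\eta) = \pi^*\eta$, whence
\[ \dx\dx_B\sigma = \dx\pi^*\eta = \pi^*\dx\eta = \pi^*\omega. \]
So $\pi^*\omega$ lies in the image of $\dx\dx_B$ and $[\pi^*\omega]=0$, proving the map descends to cohomology.

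I do not expect any serious obstacle here. The entire argument is a matter of combining $\dx\pi^*=\pi^*\dx$ and $\dx_B\pi^*\omega=0$ with the canonical Bott-Chern primitive $\xi\lrcorner\widetilde\eta$ for the pullback of an exact form, which the earlier proposition supplies essentially for free. The only subtlety is bookkeeping the degree condition $q\geq 2$ so that $\Omega^{q-2}_{TM}$ appears meaningfully in the definition of the Bott-Chern group.
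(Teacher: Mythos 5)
Your proposal is correct and follows essentially the same route as the paper: both verify $\dx$- and $\dx_B$-closedness of $\pi^*\omega$ via $\dx\pi^*=\pi^*\dx$ and \eqref{pullbacksaredBclosed}, and both exhibit the Bott--Chern primitive $\xi\lrcorner\widetilde\eta$ using the identity $\dx_B(\xi\lrcorner\widetilde\eta)=\pi^*\eta$ from the preceding proposition. No gaps.
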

\begin{proof}
 Every pullback of a $q$-form onto $TM$ is $\dx_B$-closed, as seen in \eqref{pullbacksaredBclosed}.
 Now if $\omega=\dx\beta$, then $\dx\dx_B(\xi\lrcorner\widetilde\beta)=\dx\pi^*\beta=\pi^*\omega$.
\end{proof}
We could not decide on the injectivity of the map.

\subsection{On semi-basic forms}

The cohomology ring $H^*_{\dx_B}(TM)$ will not be easy to compute with the instruments so far. The complex defined by $\dx_B$ was first introduced in \cite{Klein}.

In a first path, the cohomology $H^p(TM,\calf)$ with coefficients in the sheaf of rings $\calf\lrr TM$, of germs of smooth functions whose restriction to the fibres of $\pi$ are constant, has been considered. With two separate proofs in \cite{Anona1,AnonaRatovoarimanana,LehmannLejeune}, a fine resolution of $\calf$ with the same complex operator $\dx_B$  has been discovered:
\begin{equation}\label{fineresolutionofF}
0\lrr\calf\lrr\calf^0_0\lrr\calf^1_0\lrr\cdots\lrr\calf^p_0\lrr\calf^{p+1}_0\lrr\cdots.
\end{equation}
Let us recall the definition of the $\calf^i_0$. Clearly $\calf=\ker\dx_B:\cinf{}\lrr\Omega^1$ and $\calf^0_0=\cinf{}$, leading us to consider the \textit{semi-basic} $p$-forms, ie. forms $\omega$ on the tangent manifold such that $Y\lrcorner\omega=0,\ \forall Y\in \ker\dx\pi=V$. 
In local coordinates, we see by (\ref{trantangent1}) that $\omega$ is written as
\[  \omega=o_{i_1\cdots i_p}\dx x^{i_1}\wedge\cdots\wedge\dx x^{i_p}  \]
where $i_1<i_2<\cdots<i_p$ and the coefficients depend on $(x,v)$. It follows that $\dx_B\omega$ is also semi-basic. Next we let $\calf^p_0$ denote the sheaf of germs of semi-basic $p$-forms; its sections are the semi-basic forms.

\begin{teo}[\cite{Anona1,AnonaRatovoarimanana,LehmannLejeune}]
 \eqref{fineresolutionofF} is a fine resolution of $\calf$. In particular,
 \[  H^p(TM,\calf)= \frac{\{\omega\in\Gamma(TM;\calf^p_0): \ \dx_B\omega=0\}}{\{\dx_B\tau:\ \,\tau\in\Gamma(TM;\calf^{p-1}_0)\}} .  \]
\end{teo}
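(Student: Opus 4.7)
My plan is to verify the two hypotheses of the abstract de\,Rham theorem: fineness of each sheaf $\calf^p_0$, and local exactness of the complex \eqref{fineresolutionofF}. Once both are in hand, the identification of $H^p(TM,\calf)$ with the cohomology of the complex of global semi-basic forms is a standard consequence.

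Fineness I would dispose of first and quickly. Multiplication by any smooth function on $TM$ preserves semi-basicness, so each $\calf^p_0$ is a sheaf of $\cinf{TM}$-modules; since $TM$ is paracompact and Hausdorff, smooth partitions of unity on $TM$ produce the partitions of unity in $\calf^p_0$ subordinate to any locally finite cover required for fineness.

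The substantive step is a Poincar\'e lemma for $\dx_B$. On a chart domain $\inv{\pi}(\calu)\cong\calu\times\R^m$, a semi-basic form reads $\omega=o_I(x,v)\,\dx x^I$ and
\[
  \dx_B\omega=\papa{o_I}{v^k}\,\dx x^k\wedge\dx x^I.
\]
The idea is to recognise this complex as the fibrewise de\,Rham complex in the vertical variables, in disguise: the $\cinf{\calu}$-linear substitution $\dx x^i\leftrightarrow\dx v^i$ intertwines $\dx_B$ with the vertical exterior derivative $\dx_v=\dx v^k\wedge\papa{}{v^k}$ acting on smooth forms on $\calu\times\R^m$ that involve only $\dx v$'s. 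Since each vertical fibre is a contractible $\R^m$, the classical Poincar\'e homotopy applied with $x$ as a smooth parameter returns a vertical primitive depending smoothly on $x$, which transports back to a semi-basic $(p-1)$-form $\tau$ with $\dx_B\tau=\omega$. Exactness at $\calf^0_0$ is immediate: $\dx_B f=0$ forces $\papa{f}{v^i}=0$ on each chart, so $f$ is locally a section of $\calf$.

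The main obstacle, such as it is, lies in this Poincar\'e step: one must check that the standard integral homotopy on $\R^m$ depends smoothly on the parameter $x$ and maps semi-basic forms to semi-basic forms. Both points are routine once the identification with $\dx_v$ is in place. With local exactness and fineness established, I would invoke the abstract de\,Rham theorem to conclude that $H^p(TM,\calf)$ equals the cohomology of the complex of global sections of $\calf^*_0$ under $\dx_B$, which is the displayed formula.
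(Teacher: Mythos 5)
Your proposal is correct and follows essentially the same route as the paper: both reduce the $\dx_B$-Poincar\'e lemma for semi-basic forms to the classical fibrewise Poincar\'e lemma on the contractible fibres $\R^m$ via the substitution $\dx x^i\leftrightarrow\dx v^i$ (the paper phrases the transport back as composition with $B^{\wedge,p}$), with smooth dependence on the base parameter $x$. You additionally spell out fineness via partitions of unity and the exactness at $\calf^0_0$, which the paper's admittedly heuristic proof leaves implicit, but this is filling in standard detail rather than a different argument.
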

\begin{proof}
Heuristic it may be, this proof does not reproduce exactly those from the references. We wish to see that $\dx_B\omega=\papa{o_{i_1\cdots i_p}}{v^i}\dx x^i\wedge\dx x^{i_1}\wedge\cdots\wedge\dx x^{i_p}=0$ implies $\omega$ is $\dx_B$-exact locally. Certainly one obtains
 \[  \papa{o_{i_1\cdots i_p}}{v^i}\dx v^i\wedge\dx v^{i_1}\wedge\cdots\wedge\dx v^{i_p}=0 , \]
which is the equation of a closed $p$-form on each fibre $\R^m$, and therefore an exact form $\mathrm{d}_{|T_xM}\alpha_x$ in the $v^i$ by Poincaré lemma. Henceforth, $\omega=\frac{1}{p!}\mathrm{d}_{|T_xM}\alpha_x\circ B^{\wedge,p}$. Moreover, we may admit the $\alpha$ to be smooth in $x$. By construction the $\alpha$ are  semi-basic and therefore induce a well-defined global semi-basic $p$-form on $TM$. Of course the equation seen is $\omega=\dx_B\alpha$.
\end{proof}

One can define a \textit{$k$-semi-basic} $p$-form to be a $p$-form $\omega$ such that $V_1\wedge\cdots\wedge V_{k+1}\lrcorner\omega=0,\ \forall V_1,\ldots,V_{k+1}\in V\subset TTM$. When $k\geq p$ or $k\geq m$, then $\omega$ is just any $p$-form.

Let $\calf^p_k$ be the sheaf of germs of $k$-semi-basic $p$-forms. In particular, $\calf^p_0$ are the usual semi-basic. It follows that $\calf^p_0\subset\calf^p_{1}\subset\calf^p_2\cdots\subset\calf^p_p=\Omega^p$.
Clearly $\dx_B:\calf^p_k\lrr\calf^{p+1}_k$ is a coboundary operator. So we may consider for each $k$ the complex
\begin{equation}\label{fineresolutionofF^p_k?}
0\lrr\ker\dx_B\lrr\calf^k_k=\Omega^k\lrr\calf^{k+1}_k\lrr\cdots\lrr\calf^p_k\lrr\calf^{p+1}_k\lrr\cdots .
\end{equation}
Another complex may be written using the exact sequence $0\rr\calf^p_i\rr\calf^p_{i+1}\rr\calq^p_{i+1}\rr0$.


\vspace{3mm}

\begin{center}
 Statements and Declarations
\end{center}


\vspace*{1mm}

\noindent Funding: The research leading to these results has received funding from Funda\c c\~ao para a Ci\^encia e a Tecnologia. Project Ref. UIDB/04674/2020.


\vspace*{5mm}

\textsc{R. Albuquerque}\ \ \ \textbar\ \ \
{\texttt{rpa@uevora.pt}}

Centro de Investiga\c c\~ao em Mate\-m\'a\-ti\-ca e Aplica\c c\~oes

Rua Rom\~ao Ramalho, 59, 671-7000 \'Evora, Portugal



\begin{thebibliography}{30}







\bibitem{Alb2019a}
R. Albuquerque,
\emph{Notes on the Sasaki metric},
Expositiones Mathematicae, Vol 37(2) (2019), 207--224.



\bibitem{Alb2019b}
---, \emph{A fundamental differential system of Riemannian geometry},
Rev. Mat. Iberoam. 35 (7) (2019), 2221--2250.


\bibitem{Anona1}
M. Anona,
\emph{Sur la $\dx_L$-cohomologie}, C. R. Acad. Sc. Paris 290 (1980), 649--651.




\bibitem{AnonaRatovoarimanana}
--- and H. Ratovoarimanana,
\emph{On the nullity of Nijenhuis torsion of a vector valued 1-form},
Italian J. of Pure and App. Math. 48 (2022), pp. 251--276.






\bibitem{CGGM}
J. F. Cari\~nena, J. Clemente-Gallardo, J. A. Jover-Galtier and G. Marmo,
\emph{Tangent bundle geometry from dynamics: Application to the Kepler problem},
International Journal of Geometric Methods in Modern Physics, Vol. 14, No. 03, 1750047 (2017).




\bibitem{ClaBruck1}
M. R. Bruckheimer and R. S. Clark,
\emph{Sur les structures presque tangentes},
C. R. Acad. Sci. Paris, 251 (1960), pp. 627--629.

\bibitem{ClaBruck2}
--- and R. S. Clark,
\emph{Tensor structures on a differentiable manifold},
Annali di Mat. Pura ed Applicata, (IV), 54 (1961), pp. 123--141.


\bibitem{Crampin1}
M. Crampin,
\emph{Tangent bundle geometry for Lagrangian dynamics},
Journal of Physics A: Mathematical and General, 16:3755--3772, 1983.


\bibitem{DaviesYano}
E. T. Davies and K. Yano, \emph{Differential geometry on almost tangent manifolds}, Ann. Mat. Pura Appl. 103 (1975), 131--160.


\bibitem{Dunajski}
M. Dunajski, \emph{Null K\"ahler Geometry and Isomonodromic Deformations},
Commun. Math. Phys. 391 (2022), 77--105.


\bibitem{Eliopoulos}
H. Eliopoulos,
\emph{On the general theory of differentiable manifolds with almost tangent structure},
{Can. Math. Bull.}, {8} (1965), {721--748}.


\bibitem{FLMMR}
C. Ferrario, G. Lo Vecchio, G. Marmo, G. Morandi and C. Rubano,
\emph{The inverse problem in the calculus of variations and the geometry of the tangent bundle}, Phys. Rep. 188, No. 3-4, 147-284 (1990).


\bibitem{FroNij}
A. Fr\"olicher and A. Nijenhuis,
\emph{Theory of vector-valued differential forms}, Proc. Kon. Ned. Akad. A, 59, (1956), 338--359.




\bibitem{Grifone}
J. Grifone,
\emph{Structure presque tangente et connexions. I.},
Ann. Inst. Fourier 22, No. 1, 287--334 (1972).



\bibitem{IshiharaYano}
S. Ishihara and K. Yano, {Tangent and cotangent bundles. {Differential} geometry},
 {Pure Appl. Math., Marcel Dekker}, {16}, {1973}, {Marcel Dekker, Inc., New York, NY}.


\bibitem{Klein}
J. Klein,
\emph{Espaces variationnels et mécanique}, Thèse, Ann. Inst. Fourier, Grenoble, 12, (1962), 1-124.



\bibitem{KleinVoutier}
J. Klein and A. Voutier,
\emph{Formes extérieures génératrices de sprays}, Ann. Inst. Fourier, Grenoble, 18,2 (1968), 241-260.









\bibitem{LehmannLejeune}
J. Lehmann-Lejeune, \emph{Etude des formes différentielles liée à certaines $G$-structures},
C.R. Acad. Sc. Paris 260 (1965), 1838-1841.



\bibitem{LeonLAinzGordonMarrero}
M. de León, M. Lainz, A. López-Gordón and J. C. Marrero,
{\em A new perspective on nonholonomic brackets and Hamilton–Jacobi theory},
J. Geom. and Physics 198 (2024) 105116.


\bibitem{Michor}
P. W. Michor, Topics in Differential Geometry,
Graduate Studies in Mathematics, Vol. 93 American Mathematical Society, Providence, 2008.


\bibitem{LecandaRoy}
M. Mu\~noz-Lecanda and N. R\'oman-Roy,
\emph{Geometry of Mechanics}, {arxiv.org/abs/2401. 12650v2}, published in World Scientific, Advanced Textbooks in Mathematics, 2025.



\bibitem{Sasa}
S.~Sasaki,
{\em On the differential geometry of tangent bundles of Riemannian manifolds},
T\^ohoku Math. J. 10 (1958), 338--354.






\end{thebibliography}
\end{document}